\documentclass[10pt]{amsart}
\usepackage{amsmath}
\usepackage{amsthm}
\usepackage{amssymb}
\usepackage{a4wide}
\usepackage{color}
\usepackage[pagewise,mathlines]{lineno}
\usepackage{hyperref}
\usepackage{amsfonts}

%%%%%PAQUETE QUE NO NUMERA LAS ECUACIONES QUE NO SE CITAN
\usepackage{mathtools}
\mathtoolsset{showonlyrefs}

%\selectlanguage{french}

\parskip 6pt

 % restriction

\newcommand*{\N}{\mathbb N}
\newcommand*{\R}{\mathbb R}

\newtheorem{lemma}{Lemma}[section]
\newtheorem{cor}{Corollary}[section]
\newtheorem{prop}{Proposition}[section]
\newtheorem{theorem}{Theorem}[section]

\newtheorem{remark}{Remark}[section]
\newtheorem{example}{Example}[section]

\begin{document}

\title[The spectrum set of a nonlocal Dirichlet problem]
{Remarks on the spectrum of  a nonlocal Dirichlet problem}

\author[R. D. Benguria and M. C. Pereira]{Rafael D. Benguria and Marcone C. Pereira}

\address{Rafael D. Benguria
\hfill\break\indent Dpto. de F\'isica, P. Universidad Cat\'olica de Chile,
\hfill\break\indent Casilla 306, Santiago 22, Chile. } 
\email{{\tt rbenguri@fis.puc.cl} \hfill\break\indent {\it
Web page: }{\tt www.fis.puc.cl/$\sim$rbenguri/}}

\address{Marcone C. Pereira
\hfill\break\indent Dpto. de Matem{\'a}tica Aplicada, IME,
Universidade de S\~ao Paulo, \hfill\break\indent Rua do Mat\~ao 1010, 
S\~ao Paulo - SP, Brazil. } \email{{\tt marcone@ime.usp.br} \hfill\break\indent {\it
Web page: }{\tt www.ime.usp.br/$\sim$marcone}}

\keywords{spectrum, nonlocal equations, Dirichlet problem, boundary perturbation.\\
\indent 2010 {\it Mathematics Subject Classification.} Primary 45C05; Secondary 45A05.}

\begin{abstract} 
In this paper we analyse the spectrum of nonlocal Dirichlet problems with non-singular kernels in bounded open sets.
The novelty is two fold. First we study the continuity of eigenvalues with respect to domain perturbation via Lebesgue measure. 
Next, under additional smooth conditions on the kernel and domain, we prove differentiability of simple eigenvalues computing their first derivative discussing extremum problems for eigenvalues. 
\end{abstract}

\maketitle

\section{Introduction}
\label{Sect.intro}
\setcounter{equation}{0}

In this note, we discuss the spectrum set of a nonlocal equation with non-singular kernels and Dirichlet conditions in bounded open sets $\Omega \subset \R^N$.
We consider the nonlocal eigenvalue problem
\begin{equation} \label{eigeneq}
\left\{
\begin{gathered}
(J* u)(x) - u(x) = - \lambda \, u(x), \quad x \in \Omega, \\
u(x) = 0, \quad x \in \R^N \setminus \Omega,
\end{gathered} 
\right.
\end{equation}
where $J*u$ stands for the usual convolution 
$$
(J * u)(x) = \int_{\R^N} J(x-y) u(y) dy
$$
with a kernel $J$. Through out this article the function $J$ satisfies the hypotheses 
$$
{\bf (H)} \qquad 
\begin{gathered}
J \in \mathcal{C}(\R^N,\R) \textrm{ is a nonnegative function, spherically symmetric and radially decreasing } \\ 
\textrm{ with } J(0)>0 \textrm{ and } 
\int_{\R^N} J(x) \, dx = 1.
\end{gathered}
$$

Our main goal is to study the continuity of the spectrum set with respect to the variation of the domain $\Omega$. 
Next, assuming $J$ and $\Omega$ are still $\mathcal{C}^1$-regular, we also show differentiability of simple eigenvalues computing an expression for their  first derivative allowing $\Omega$ to vary in the set of open sets which are $\mathcal{C}^1$-diffeomorphic.

Notice that analysing the spectral properties of \eqref{eigeneq} is equivalent to study the spectrum of 
the linear operator $\mathcal{B}_\Omega: W_\Omega \mapsto W_\Omega$ where
$W_\Omega = \{ u \in L^2(\R^N) \, : \, u(x) \equiv 0 \textrm{ in } \R^N \setminus \Omega \}$
and
\begin{equation} \label{Bdef}
\mathcal{B}_\Omega u(x) \equiv u(x) - \int_\Omega J(x-y) u(y) dy, \quad x \in \Omega. 
\end{equation}

Moreover, one has that the operator $\mathcal{B}_\Omega$ is the sum of the identity on the Hilbert space $W_\Omega$ minus the compact and self-adjoint operator 
$\mathcal{J}_{\Omega}: L^2(\Omega) \mapsto L^2(\Omega)$ which is given by the convolution  
\begin{equation} \label{opJ}
\mathcal{J}_\Omega u (x) = (J*u)(x), \quad x \in \Omega. 
\end{equation}

We will see that there exists a precise relationship between the spectrum of the operators $\mathcal{B}_\Omega$ and $\mathcal{J}_{\Omega}$. 
Indeed, the continuity properties for the eigenvalues of $\mathcal{B}_\Omega$ will be obtained by an accurate analysis of the spectrum of $\mathcal{J}_{\Omega}$ via perturbation theory for linear operators developed in \cite{Kato}.
Here, we mention our main result in this direction which guarantees the convergence of eigenvalues when the Lebesgue measure of symmetric difference of involved open sets vanishes.

\begin{theorem} \label{singleeig}
Let $\Omega_n \subset \R^N$ be a sequence of bounded open sets with 
$$
|\Omega \setminus \Omega_n| + |\Omega_n \setminus \Omega| \to 0, \quad \textrm{ as } n \to \infty
$$
for some bounded open set $\Omega \subset \R^N$.

Then, if $\mu(\Omega)$ is an eigenvalue of ${\mathcal{J}}_{\Omega}$, there exists a sequence of eigenvalues $\mu(\Omega_n) \in \sigma({\mathcal{J}}_{\Omega_n})$ such that 
$$
\mu(\Omega_n) \to \mu(\Omega), \quad \textrm{ as } n \to \infty.
$$

In particular, if $\lambda(\Omega)$ is an eigenvalue of ${\mathcal{B}}_{\Omega}$, there exists a sequence of eigenvalues $\lambda(\Omega_n) \in \sigma(\mathcal{B}_{\Omega_n})$ with
$$
\lambda(\Omega_n) \to \lambda(\Omega), \quad \textrm{ as }  n \to \infty.
$$
\end{theorem}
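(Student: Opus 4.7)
\emph{Plan of proof.} The strategy is to lift all the operators $\mathcal{J}_{\Omega_n}$ and $\mathcal{J}_\Omega$ to the common Hilbert space $L^2(\R^N)$, establish Hilbert--Schmidt convergence of the extensions, and then invoke the standard spectral stability result for norm-convergent self-adjoint operators, which is precisely the regime where Kato's perturbation theory is most effective.

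First I would extend each $\mathcal{J}_\Omega$ to an operator $\tilde{\mathcal{J}}_\Omega$ on $L^2(\R^N)$ with integral kernel $K_\Omega(x,y) = \chi_\Omega(x)\chi_\Omega(y) J(x-y)$, and similarly for each $\Omega_n$. Since $\Omega$ is bounded and, by \textbf{(H)}, $J$ is continuous and radially decreasing with $J(x) \le J(0)$, the kernel $K_\Omega$ lies in $L^2(\R^N \times \R^N)$, so $\tilde{\mathcal{J}}_\Omega$ is a self-adjoint Hilbert--Schmidt operator whose nonzero spectrum coincides with that of $\mathcal{J}_\Omega$ on $L^2(\Omega)$ (the extra $0$-eigenspace is $W_\Omega^\perp$). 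The same holds for every $\Omega_n$.

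Next I would prove norm convergence of the lifted operators. Writing $K_{\Omega_n} - K_\Omega = \chi_{\Omega_n}(x)[\chi_{\Omega_n}(y) - \chi_\Omega(y)] J(x-y) + [\chi_{\Omega_n}(x) - \chi_\Omega(x)]\chi_\Omega(y) J(x-y)$ and using $|\chi_{\Omega_n} - \chi_\Omega|^2 = \chi_{\Omega_n \triangle \Omega}$ together with $J \le J(0)$, a direct calculation yields an estimate of the form $\|\tilde{\mathcal{J}}_{\Omega_n} - \tilde{\mathcal{J}}_\Omega\|_{HS}^2 \le 2 J(0)^2 (|\Omega_n| + |\Omega|)\, |\Omega_n \triangle \Omega|$, which tends to $0$ by hypothesis (note $|\Omega_n| \le |\Omega| + |\Omega_n \setminus \Omega|$ stays bounded). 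Operator-norm convergence follows, and standard spectral stability for self-adjoint operators under norm convergence (cf.\ \cite{Kato}) then produces, for each $\mu(\Omega) \in \sigma(\tilde{\mathcal{J}}_\Omega)$, a sequence $\mu(\Omega_n) \in \sigma(\tilde{\mathcal{J}}_{\Omega_n})$ with $\mu(\Omega_n) \to \mu(\Omega)$. When $\mu(\Omega) \ne 0$, we have $\mu(\Omega_n) \ne 0$ for large $n$, and hence $\mu(\Omega_n)$ is an honest eigenvalue of $\mathcal{J}_{\Omega_n}$ by compactness.

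Finally, the conclusion for $\mathcal{B}_\Omega$ is immediate: since $\mathcal{B}_\Omega = I - \mathcal{J}_\Omega$ on $W_\Omega$ and the same relation holds on each $W_{\Omega_n}$, one has $\lambda \in \sigma(\mathcal{B}_\Omega)$ if and only if $1 - \lambda \in \sigma(\mathcal{J}_\Omega)$, so the eigenvalue sequence transfers under the substitution $\lambda = 1 - \mu$. The main conceptual obstacle is that the operators $\mathcal{J}_{\Omega_n}$ and $\mathcal{J}_\Omega$ originally live on \emph{different} Hilbert spaces $L^2(\Omega_n)$ and $L^2(\Omega)$; the zero-extension lift to $L^2(\R^N)$ is the key device that removes this obstacle and reduces the theorem to a Hilbert--Schmidt kernel estimate plus classical spectral perturbation theory.
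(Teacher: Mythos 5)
Your argument is essentially the paper's own: lift the operators to a common Hilbert space via the kernel $\chi_\Omega(x)\chi_\Omega(y)J(x-y)$, show that the symmetric-difference hypothesis forces the lifted operators to converge in norm (you use the Hilbert--Schmidt norm, the paper estimates the operator norm directly in Lemma~\ref{lemJt}, but your bound dominates theirs), identify the nonzero eigenvalues of the extension with those of $\mathcal{J}_\Omega$ as in Lemma~\ref{lemig}, invoke Kato's spectral stability for norm-convergent self-adjoint compact operators, and finally transfer to $\mathcal{B}_\Omega$ via $\lambda = 1-\mu$ (Remark~\ref{propeig}(c)). The only cosmetic difference is that you extend to all of $L^2(\R^N)$ rather than to $L^2(D)$ for a fixed bounded container $D$; this is if anything cleaner, since the hypothesis does not automatically provide a single bounded $D$ containing every $\Omega_n$.
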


 Next, we follow the approach introduced in \cite{Henry} to perturb $\Omega$
 in order to take derivatives of simple eigenvalues with respect to the domain. 
 More precisely, if $\Omega \subset \R^N$ is a $\mathcal{C}^1$-regular open bounded set, and 
$h:\Omega \mapsto \R^N$ is a $\mathcal{C}^1$-diffeomorphism to its image, we define the composition map
$$
h^* v(x) = (v \circ h)(x), \quad x \in \Omega,
$$
for any $v$ set on $h(\Omega)$. $h^*: L^2(h(\Omega)) \mapsto L^2(\Omega)$ is an isomorphism with 
$(h^*)^{-1} = (h^{-1})^*$. 

For such imbedding $h$ and bounded region $\Omega$, one can introduce the nonlocal Dirichlet operator $\mathcal{B}_{h(\Omega)}$ on the perturbed open set $h(\Omega)$ by 
\begin{equation} \label{Bh}
\left( \mathcal{B}_{h(\Omega)} v \right)(y) =  v(y) - \int_{h(\Omega)} J(y - w) v(w) dw, \quad y \in h(\Omega),
\end{equation}
with $\mathcal{B}_{h(\Omega)}: W_{h(\Omega)} \mapsto W_{h(\Omega)}$.
On the other hand, we can use $h^*$ to set $h^* \mathcal{B}_{h(\Omega)} {h^*}^{-1}: W_\Omega \mapsto W_\Omega$ by
\begin{equation} \label{hBh}
h^* \mathcal{B}_{h(\Omega)} {h^*}^{-1} u(x) = \int_{h(\Omega)} J(h(x) - w) (u \circ h^{-1})(w) dw, 
\quad \forall x \in \Omega.
\end{equation}

It is known that expressions \eqref{Bh} and \eqref{hBh} are the customary manner to describe 
motion or deformation of regions. 
Form \eqref{Bh} is called the Lagrangian description, and \eqref{hBh} the  Eulerian one. 
The former is written in a fixed coordinate system while the Lagrangian does not. 
Also,  
\begin{equation} \label{eqBint}
h^* \mathcal{B}_{h(\Omega)} {h^*}^{-1} u(x) = v(y) - \int_{h(\Omega)} J(y - w) v(w) dw = \left( \mathcal{B}_{h(\Omega)} v \right)(y)
\end{equation}
if we take $y=h(x)$ and $v(y) = (u \circ h^{-1})(y) = {h^*}^{-1} u(y)$ for $y \in h(\Omega)$.

In this way, we perturb our eigenvalue problem \eqref{eigeneq}. We take imbeddings $h:\Omega \mapsto \R^N$ varying in the set of diffeomorphisms  $\mathop{\rm Diff}^1(\Omega)$ studying the eigenvalues of the operators \eqref{Bh} and \eqref{hBh} which are the same. 
We have the following result concerning the  derivative of simple eigenvalues.

\begin{theorem} \label{cderi}
Let $\lambda_0$ be a simple eigenvalue for $\mathcal{B}_{\Omega}$ with corresponding normalized eigenfuction 
$u_0$ and $J \in \mathcal{C}^1(\R^N, \R)$ satisfying $(\rm{H})$. 
Then, there exists a neighbourhood $\mathcal{V}$ of the inclusion $i_{\Omega} \in \mathop{\rm Diff}^1(\Omega)$, 
and $\mathcal{C}^1$-functions 
$(u_h, \lambda_h)$ from $\mathcal{V}$ into $L^2(\Omega) \times \mathbb{R}$ which satisfy 
$h^* \mathcal{B}_{h(\Omega)} {h^*}^{-1} u_h(x) = \lambda_h u(x)$, $x \in \Omega$,
with $u_h \in \mathcal{C}^1(\Omega)$.  
Also, $\lambda_h$ is a simple eigenvalue, $(\lambda_{i_{\Omega}}, u_{i_\Omega} ) = ( \lambda_0, u_0)$, and the domain derivative is given by 
\begin{equation} \label{dodeint}
\frac{\partial \lambda}{\partial h}(i_\Omega) \cdot V = - (1 - \lambda_0) \int_{\partial \Omega} u_0^2 \; V \cdot N_\Omega dS \quad 
\mbox{for all }  V \in \mathcal{C}^1(\Omega, \R^N)
\end{equation}
where $\partial \Omega$ denotes the boundary of $\Omega$ and $N_\Omega$ its normal vector.
\end{theorem}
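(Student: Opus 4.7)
Following the approach of \cite{Henry}, I would pull the perturbed operator back to a fixed Hilbert space, verify $\mathcal{C}^1$-dependence on $h$, invoke analytic perturbation theory for a simple isolated eigenvalue, and finally simplify the resulting abstract derivative using the eigenvalue equation together with the divergence theorem.

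Changing variables $w = h(z)$ in \eqref{hBh} rewrites the Eulerian operator as a family on the fixed space $L^2(\Omega)$,
\begin{equation}
A(h)u(x) = u(x) - \int_\Omega J(h(x) - h(z))\,|\det Dh(z)|\,u(z)\,dz,
\end{equation}
with $A(i_\Omega) = \mathcal{B}_\Omega$. Since $J \in \mathcal{C}^1$ and $h \mapsto |\det Dh|$ depends smoothly on $h \in \mathcal{C}^1(\bar\Omega,\R^N)$, the kernel depends $\mathcal{C}^1$ on $h$ in $L^\infty(\bar\Omega\times\bar\Omega)$, so $h \mapsto A(h)$ is a $\mathcal{C}^1$ map from some neighbourhood $\mathcal{V}$ of $i_\Omega$ in $\mathop{\rm Diff}^1(\Omega)$ into $\mathcal{L}(L^2(\Omega))$. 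Because $\lambda_0$ is an isolated simple eigenvalue of the compact perturbation $\mathcal{B}_\Omega = I - \mathcal{J}_\Omega$ (in particular $\lambda_0 \neq 1$, the sole accumulation point of $\sigma(\mathcal{B}_\Omega)$), Kato's perturbation theory \cite{Kato} produces $\mathcal{C}^1$ maps $h\mapsto(\lambda_h,u_h)$ with $(\lambda_{i_\Omega},u_{i_\Omega})=(\lambda_0,u_0)$, $\lambda_h$ simple, and $A(h)u_h = \lambda_h u_h$. The interior regularity $u_h \in \mathcal{C}^1(\Omega)$ follows by differentiating the rearranged identity $u_h(x)=(1-\lambda_h)^{-1}\int_\Omega J(h(x)-h(z))|\det Dh(z)|u_h(z)\,dz$ under the integral, using $J\in\mathcal{C}^1$.

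For the derivative, fix $V \in \mathcal{C}^1(\Omega,\R^N)$ and differentiate $A(h_t) u_{h_t} = \lambda_{h_t} u_{h_t}$ along $h_t = i_\Omega + tV$ at $t = 0$. Pairing with $u_0$ and exploiting the self-adjointness of $\mathcal{B}_\Omega - \lambda_0$ on $L^2(\Omega)$, the contribution of $\partial_t u_{h_t}|_{t=0}$ cancels and
\begin{equation}
\frac{\partial\lambda}{\partial h}(i_\Omega)\cdot V = \bigl\langle \dot A(i_\Omega)\cdot V\,u_0,\,u_0\bigr\rangle_{L^2(\Omega)}.
\end{equation}
A direct differentiation gives
\begin{equation}
\dot A(i_\Omega)\cdot V\,u_0(x) = -\int_\Omega\bigl[\nabla J(x-z)\cdot(V(x)-V(z)) + J(x-z)\operatorname{div}V(z)\bigr]u_0(z)\,dz.
\end{equation}
After multiplying by $u_0(x)$ and integrating in $x$, the exchange $x\leftrightarrow z$ in the piece involving $V(z)$ together with the spherical symmetry $\nabla J(z-x)=-\nabla J(x-z)$ combines the two $\nabla J$ contributions; invoking the eigenfunction identity $(J*u_0)(x) = (1-\lambda_0)u_0(x)$ on $\Omega$ (so that $\nabla(J*u_0)=(1-\lambda_0)\nabla u_0$ there) then yields
\begin{equation}
\frac{\partial\lambda}{\partial h}(i_\Omega)\cdot V = -(1-\lambda_0)\int_\Omega\bigl[V\cdot\nabla(u_0^2) + u_0^2\operatorname{div}V\bigr]dx = -(1-\lambda_0)\int_\Omega\operatorname{div}(u_0^2 V)\,dx,
\end{equation}
and the divergence theorem produces \eqref{dodeint}.

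The principal technical obstacles are (i) verifying genuine $\mathcal{C}^1$-dependence (not merely continuity) of $h\mapsto A(h)$ in operator norm, which is exactly where the hypothesis $J\in\mathcal{C}^1$ enters, and (ii) securing interior $\mathcal{C}^1$-regularity of $u_0$ to legitimize the divergence theorem. Note that $u_0$ does \emph{not} vanish on $\partial\Omega$ in general, so the surface integral in \eqref{dodeint} is genuinely nonzero, which is the distinctive feature of the nonlocal Dirichlet setting.
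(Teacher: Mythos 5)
Your proposal is correct, and the identification of the operator family $A(h)$ on the fixed space $L^2(\Omega)$, the appeal to $J\in\mathcal{C}^1$ for $\mathcal{C}^1$-dependence, and the use of Kato/IFT for the persistence of a simple eigenvalue all coincide with the paper's strategy (the paper builds a map $F(h,\mu,u)$ on $\mathop{\rm Diff}^1(\Omega)\times\R\times L^2(\Omega)$ and invokes the Implicit Function Theorem, working with $\mathcal{J}_\Omega$ and transferring to $\mathcal{B}_\Omega$ via $\lambda = 1-\mu$). Where you genuinely diverge is in the computation of the derivative. The paper follows Henry's machinery: it introduces the anti-convective derivative $D_t = \partial_t - U\cdot\partial_x$, invokes \cite[Lemma 2.1]{Henry} to commute the pull-back with differentiation, and then cites \cite[Theorem 1.11]{Henry} to differentiate the integral over the moving region $h(t,\Omega)$, which produces the boundary term as a Reynolds-type transport contribution. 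You instead differentiate the pulled-back kernel $J(h(x)-h(z))\,|\det Dh(z)|$ directly, split the result into a $\nabla J$ piece and a $\operatorname{div}V$ piece, symmetrize the $\nabla J$ piece in $x\leftrightarrow z$ using $\nabla J(z-x) = -\nabla J(x-z)$, feed in the eigenvalue identity $(J*u_0) = (1-\lambda_0)u_0$ on $\Omega$, and close with the divergence theorem. Both routes are valid. Yours is more elementary and self-contained -- it replaces two cited structural lemmas from Henry with a one-page explicit computation -- while the paper's route is more systematic and would adapt without change to situations where the moving-domain integral cannot be so cleanly collapsed. One small point worth making explicit in your write-up: the operator $A(h)$ is self-adjoint on $L^2(\Omega)$ only at $h = i_\Omega$ (for general $h$ it is self-adjoint only in the weighted space with weight $|\det Dh|$), but since you pair with $u_0$ precisely at $t=0$, this is all you need to cancel the $\partial_t u_{h_t}$ contribution.
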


At this point, it is worth noticing that we are improving here results from \cite{GR} where the domain perturbation to 
the first eigenvalue of \eqref{eigeneq} was considered and formula \eqref{dodeint} was first obtained.  
There, the authors have used the variational formulation of the first eigenvalue and the positivity of the corresponding 
eigenfunction which holds just in this particular case. Our result is more general since it holds for any simple 
eigenvalue also showing smooth persistence.

Finally, we mention some authors as \cite{ElLibro, Fife, HMMV} associate $J$ under conditions $(\rm{H})$ to a radial 
probability density calling equation \eqref{eigeneq} a nonlocal analogous to the Dirichlet boundary conditions problem 
to the Laplacian. Indeed, several continuous models for species and human mobility have been proposed using such 
nonlocal approach, in order to look for more realistic dispersion equations \cite{XF, BC, YY}.
Recall that hostile surroundings are modeled by the Dirichlet condition as in \eqref{eigeneq}.  
Besides the applied models with such kernels, the mathematical interest is mainly due to the fact that, in general, 
there is no regularizing effect and therefore no general compactness tools are available. 

The paper is organized as follows: in Section \ref{pre}, we show some preliminary results concerning to the spectrum 
of $\mathcal{J}_{\Omega}$ and $\mathcal{B}_{\Omega}$ also discussing isoperimetric inequalities for $\mathcal{B}
_\Omega$. Such inequalities are an analogue of Rayleigh-Faber-Krahn and Hong-Krahn-Szeg\"o inequalities and 
have been recently obtained for $\mathcal{J}_\Omega$ in \cite{RS}. For  a recent review on isoperimetric inequalities 
we refer to \cite{Beng}. 

In Section \ref{contS}, we study the continuity of eigenvalues with respect to $\Omega$. We also take into account 
recent results concerning the convergence of eigenvalues 
posed in oscillating and perforated domains. Finally, in  Section \ref{deriS}, we obtain the stability of a simple eigenvalue  
with respect to the  variation of smooth domains performed by imbeddings, proving Theorem \ref{cderi}.

\section{Basic facts and preliminary results} \label{pre}

Let us first discuss the operator $\mathcal{J}_{\Omega}: L^2(\Omega) \mapsto L^2(\Omega)$ given by 
the convolution \eqref{opJ}.
Notice $\mathcal{J}_{\Omega}$ is bounded, compact and self-adjoint satisfying 
$$
\| {\mathcal{J}}_\Omega \|_{L^2(\Omega)} \leq |\Omega| \| J \|_{L^\infty(\R^N)}.
$$ 
Such a proof is straightforward and can be found for instance in \cite{AS, CHvol1}.
In the sequel, we mention other properties with respect to its spectral set which are also consequence of classical results from functional analysis.

\begin{remark} \label{EiPro}
Since $\mathcal{J}_{\Omega}$ is compact and self-adjoint, one may obtain, for instance from \cite[Theorem 2.10 Chapter V]{Kato}, that the spectrum $\sigma(\mathcal{J}_{\Omega})$ consists of at most a countable number of real eigenvalues with finite multiplicities, possible excepting zero.
Let us enumerate their eigenvalues in decreasing order of magnitude 
$$
|\mu_1| \geq |\mu_2| \geq ...
$$
If $P_1$, $P_2$, ... are the associated eigenprojections of $\mathcal{J}_{\Omega}$, then $P_i$ are orthogonal and 
self-adjoint with finite dimensional range. Also, we have the spectral representation 
$$
\mathcal{J}_{\Omega} = \sum_{i\geq1} \mu_i P_i
$$ 
in the sense of convergence in norm with projections forming a complete orthogonal family together with the 
orthogonal projection $P_0$ on the null space of $\mathcal{J}_{\Omega}$. 
\end{remark}

\begin{remark} \label{EiPro2}
From \cite[Theorem 2.10 Chapter V]{Kato}, we have that $0 \in \sigma(\mathcal{J}_{\Omega})$. Also, if there exists 
an infinite sequence of distinct eigenvalues $\mu_i$, then $\mu_i \to 0$ as $i \to +\infty$, and then, zero belongs to 
the essential spectrum $\sigma_{ess}(\mathcal{J}_{\Omega})$. On the other hand, if the set of eigenvalues is finite, 
its null space is not trivial, indeed, it is an infinite dimensional subspace of $L^2(\Omega)$. 
\end{remark}

\begin{remark} \label{mu1}
We note that $|\mu_1|$ is equal to the spectral radius of $\mathcal{J}_{\Omega}$ which coincides with its norm
$$
|\mu_1| = \lim_{n\to +\infty} \| \mathcal{J}_{\Omega}^n \|^{1/n} = \| \mathcal{J}_{\Omega} \|.
$$
Moreover, it is known from \cite{AS, RS}, that the first eigenvalue $\mu_1$ is positive, simple, whose corresponding 
eigenfunction $u_1$ can be chosen strictly positive in $\Omega$. 
\end{remark}

Since the eigenvalues $\mu_i$ have finite multiplicity, we can set them in a decreasing order of magnitude also 
taking account their multiplicity. Hence, we denote by  $u_1$, $u_2$, ... the corresponding eigenfunctions for each 
eigenvalue $\mu_i$ setting 
$$
\mathcal{J}_{\Omega} u_i(x) = \mu_i(\Omega) u_i(x). 
$$

Now, let us denote the range of $\mathcal{J}_{\Omega}$ by ${\rm R}(\mathcal{J}_\Omega)$. 
Since $\mathcal{J}_{\Omega}$ is self-adjoint, ${\rm R}(\mathcal{J}_\Omega)$ is orthogonal to 
the kernel  of  $\mathcal{J}_\Omega$, ${\rm ker}(\mathcal{J}_\Omega)$, setting a useful  decomposition for $L^2(\Omega)$. 
From Remark \ref{EiPro}, one gets 
$$
L^2(\Omega) = {\rm R}(\mathcal{J}_\Omega) \oplus {\rm ker}(\mathcal{J}_\Omega).
$$  
We still have the following result concerning  ${\rm R}(\mathcal{J}_\Omega)$. 

\begin{lemma} \label{lemI}
Assume ${\rm R}(\mathcal{J}_\Omega)$ is finite dimensional. 

Then, there exist a set of normalized eigenfunctions $\{ u_1, ..., u_m  \} \subset L^2(\Omega)$, 
associated to nonzero eigenvalues $\mu_i(\Omega)$, such that 
\begin{equation} \label{eq210}
J(x-y) = \sum_{i=1}^m \mu_i(\Omega) u_i(x) u_i(y), \quad \textrm{ a.e. } \Omega.
\end{equation}

In particular,
$J(x) = \sum_{i=1}^m \mu_i(\Omega) u_i(x) u_i(0)$ a.e. $\Omega$, 
and $J(0) |\Omega| = \sum_{i=1}^m \mu_i(\Omega)$.
\end{lemma}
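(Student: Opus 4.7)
The plan is to combine the spectral representation of $\mathcal{J}_\Omega$ with its concrete integral representation and then read off the kernel $J(x-y)$ as a finite-rank tensor.

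\textbf{Step 1 (spectral expansion).} Since $\mathrm{R}(\mathcal{J}_\Omega)$ is finite dimensional and $\mathcal{J}_\Omega$ is compact and self-adjoint, Remark \ref{EiPro} supplies finitely many nonzero eigenvalues $\mu_1,\dots,\mu_m$ (listed with multiplicity) and orthonormal eigenfunctions $u_1,\dots,u_m\in L^2(\Omega)$ whose span is $\mathrm{R}(\mathcal{J}_\Omega)$. The spectral expansion reads
\[
\mathcal{J}_\Omega f \;=\; \sum_{i=1}^m \mu_i\, \langle f,u_i\rangle_{L^2(\Omega)}\, u_i, \qquad f\in L^2(\Omega).
\]

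\textbf{Step 2 (identify kernels).} Comparing this with the integral form $\mathcal{J}_\Omega f(x)=\int_\Omega J(x-y)f(y)\,dy$ gives, for every $f\in L^2(\Omega)$,
\[
\int_\Omega \Big[ J(x-y) - \sum_{i=1}^m \mu_i u_i(x) u_i(y) \Big] f(y)\, dy \;=\; 0 \quad \text{for a.e.\ } x\in\Omega.
\]
The bracketed kernel $K(x,y)$ lies in $L^2(\Omega\times\Omega)$ since $J$ is continuous and hence bounded on $\overline{\Omega-\Omega}$, while each $u_i\in L^2(\Omega)$. Testing against a countable dense family $\{f_n\}\subset L^2(\Omega)$ and invoking Fubini, I obtain a single full-measure set of $x\in\Omega$ on which $K(x,\cdot)$ annihilates every $f_n$, hence $K(x,\cdot)=0$ in $L^2(\Omega)$; a second application of Fubini gives $K\equiv 0$ a.e.\ on $\Omega\times\Omega$. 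This is the claimed identity \eqref{eq210}.

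\textbf{Step 3 (pointwise consequences).} For the ``in particular'' part I would first upgrade the a.e.\ identity to a pointwise one. The eigenvalue relation $\mu_i u_i(x)=\int_\Omega J(x-y) u_i(y)\, dy$ combined with the continuity of $J$ and dominated convergence shows that each $u_i$ admits a continuous representative defined on all of $\R^N$; taking these representatives, both sides of \eqref{eq210} are continuous, so the identity becomes pointwise. Specialising $y=0$ yields $J(x)=\sum_{i=1}^m \mu_i u_i(x)u_i(0)$ on $\Omega$, while setting $x=y$ and integrating over $\Omega$, using $\int_\Omega u_i^2=1$, produces $J(0)\,|\Omega|=\sum_{i=1}^m \mu_i$.

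\textbf{Main obstacle.} The only mildly delicate point is the implication in Step 2, passing from ``$\int_\Omega K(x,y) f(y)\, dy = 0$ for every $f$'' to ``$K=0$ a.e.\ on the product''; this is standard but requires separability of $L^2(\Omega)$ to commute the quantifiers ``for every $f$'' and ``for a.e.\ $x$''. Everything else is a direct consequence of the spectral theorem for compact self-adjoint operators together with a one-line integration.
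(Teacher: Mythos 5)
Your proof is correct and follows essentially the same route as the paper: expand $\mathcal{J}_\Omega$ spectrally via its finite-rank representation, compare with the integral kernel $J(x-y)$, and conclude that the difference annihilates all of $L^2(\Omega)$. You additionally spell out the separability argument needed to pass from ``for all $f$'' to ``$K=0$ a.e.\ on $\Omega\times\Omega$'' and the continuity upgrade needed to make sense of $y=0$ and $x=y$ in the ``in particular'' claims, both of which the paper's proof leaves implicit.
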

\begin{proof}
First, we recall that $L^2(\Omega)$ is the direct sum of ${\rm R}(\mathcal{J}_\Omega)$ and ${\rm ker}(\mathcal{J}
_\Omega)$. 
Thus, if ${\rm R}(\mathcal{J}_\Omega)$ is finite dimensional, by \ref{EiPro} again, there exist $\{ u_1, ..., u_m  \} 
\subset L^2(\Omega)$ given by orthogonal and normalized eigenfunctions of $\mathcal{J}_\Omega$, associated to 
nonzero eigenvalues $\mu_i(\Omega)$ such that 
$$
{\rm R}(\mathcal{J}_\Omega) = [u_1, ..., u_m].
$$
 
Hence, we can take the orthogonal projections $P_i$ as 
$$
P_i u(x) = \mu_i(\Omega) \left(  \int_\Omega u_i(y) u(y) \right) u_i(x), \quad x \in \Omega.
$$
For all $u \in L^2(\Omega)$, we have 
$$
\mathcal{J}_\Omega u(x) = \int_\Omega J(x-y) u(y) dy = \sum_{i=1}^m \mu_i(\Omega) \left( \int_\Omega u_i(y) u(y) dy \right) u_i(x), \quad x \in \Omega.
$$
Consequently,
$$
0 = \int_\Omega \left( J(x-y) - \sum_{i=1}^m \mu_i(\Omega) u_i(x) u_i(y) \right) u(y) dy, \quad \forall u \in L^2(\Omega) 
\textrm{ and } \forall x \in \Omega,
$$
completing the proof.
\end{proof}

Now, let us consider the operator $\mathcal{B}_\Omega: W_\Omega \mapsto W_\Omega$ defined by \eqref{Bdef}.
Since $\mathcal{B}_\Omega$ is a scalar combination of the identity and the self-adjoint operator $\mathcal{J}
_{\Omega}$, $\mathcal{B}_{\Omega}$ is also a bounded self-adjoint operator in $L^2(\Omega)$.

\begin{remark} \label{propeig}
We notice that:
\begin{itemize}
\item[a)] $\lambda(\Omega) \in \sigma(\mathcal{B}_\Omega)$ is an eigenvalue, if and only if, there exists $u \in 
L^2(\Omega)$, $u \neq 0$, with $u(x) \equiv 0$ in $\R^N \setminus \Omega$, satisfying equation \eqref{eigeneq} for 
this same $\lambda(\Omega)$. 

\item[b)] $u \in L^2(\Omega)$ is a fixed point of $\mathcal{B}_\Omega$, if and only if, $u$ belongs to the null set of $
\mathcal{J}_{\Omega}$.

\item[c)] $\lambda(\Omega) \in \sigma(\mathcal{B}_\Omega)$ is an eigenvalue, if and only if, $1 - \lambda(\Omega)$ 
is an eigenvalue of the compact operator $\mathcal{J}_{\Omega}$. Also, 
$0 \in \sigma_{ess}(\mathcal{J}_{\Omega})$, if 
and only if, $1 \in \sigma_{ess}(\mathcal{B}_\Omega)$, and zero is an eigenvalue of $\mathcal{J}_{\Omega}$, if and 
only if, $1 \in \sigma(\mathcal{J}_{\Omega})$. 
\item[d)] From Remark \ref{mu1}, we know that the first eigenvalue of $\mathcal{B}_\Omega$, which is given by $
\lambda_1(\Omega) = 1 - \mu_1(\Omega)$, it is associated to a strictly positive eigenfunction which is also simple 
with 
$$
\lambda_1(\Omega) = 1 - \| \mathcal{J}_{\Omega} \| < 1.
$$
\item[e)] Further, since we are assuming $\int_{\R^N} J(y) dy =1$, we have 
$$
\frac{1}{2} \int_{\R^N} \int_{\R^N} J(x-y) (u(y) - u(x))^2 dy dx = \| u \|_{L^2(\R^N)}^2 - \int_{\R^N}\int_{\R^N}J(x-y) u(y) 
u(x)\, dy \, dx,
$$ 
and then, we get from  $\rm{d})$ that 
\begin{equation} \label{eq350}
\lambda_1(\Omega) = \inf_{u \neq 0 \textrm{ in } W_\Omega} \frac{\frac{1}{2} \int_{\R^N} \int_{\R^N} J(x-y) (u(y) - 
u(x))^2 dy dx}{\| u \|_{L^2(\R^N)}^2}.
\end{equation}
For more details, see \cite{ElLibro, GR}.
\end{itemize}
\end{remark}

Let us take $u_1$, the first positive eigenfunction of $\mathcal{B}_\Omega$. It follows from \eqref{eigeneq} that 
\begin{eqnarray*}
- \lambda_1(\Omega) \int_\Omega (u_1(x))^2 dx & = & \int_\Omega u_1(x) \int_\Omega J(x-y) ( u_1(y) - u_1(x) ) dy \\
& = & - \frac{1}{2} \int_\Omega \int_\Omega J(x-y) ( u_1(y) - u_1(x) )^2 dy dx \leq 0.
\end{eqnarray*}
Thus, $0 \leq \lambda_1(\Omega) < 1$ with $\lambda_1(\Omega) =0$, if and only if, $u_1$ is a positive constant.
Now, due to \cite[Proposition 2.2]{ElLibro}, one can get that $J*u(x) - u(x) \equiv 0$ in $\Omega$ with $u(x) \equiv 0$ in $\R^N \setminus \Omega$, if and only if, $u(x) \equiv 0$ in $\R^N$. Hence, we conclude that
\begin{equation} \label{300}
0 < \lambda_1(\Omega) < 1 \quad \textrm{ and } \quad 0 < \| \mathcal{J}_{\Omega} \| < 1
\end{equation}  
for any bounded open set $\Omega$.

Consequently, we obtain from \eqref{300} that $\mathcal{B}_\Omega$ is a perturbation of the identity 
being an invertible operator with continuous inverse given by
$
\mathcal{B}_\Omega^{-1} u = (I - \mathcal{J}_{\Omega})^{-1} u = \sum_{n=0}^{\infty} \mathcal{J}_{\Omega}^n u.
$

\begin{remark}
Others informations and properties concerning the operators $\mathcal{J}_{\Omega}$ and $\mathcal{B}_{\Omega}$, and their spectrum set, can be seen for instance in \cite{Vivian, AS, CHvol1} and references therein. Moreover, it is important to know that all  the results discussed to this point remain valid substituting the radial condition on the function $J$ with the even one, i.e., assuming $J(-x)=J(x)$.
\end{remark}

Finally, let us just mention some isoperimetric inequalities for the  
first and second eigenvalues of  $\mathcal{B}_\Omega$.
Due to the symmetric condition imposed on the kernel $J$, an analogue of Rayleigh-Faber-Krahn and Hong-Krahn-Szeg\"o inequalities for $\mathcal{J}_\Omega$ have been shown in  \cite{RS}. Hence, since Remark \ref{propeig} gives a precise relationship between the spectrum of $\mathcal{J}_\Omega$ and $\mathcal{B}_\Omega$, we can easily extend the results from \cite{RS} to the Dirichlet problem \eqref{eigeneq}.

Concerning the Rayleigh-Faber-Krahn inequality, we have the following result:

\begin{cor} \label{corl1}
Let $\Omega^*$ denote an open ball with same measure as $\Omega$.
Then, under conditions $(\rm{H})$, the ball $\Omega^*$ is a minimizer for the first eigenvalue of $\mathcal{B}_\Omega$, i.e., 
$$
\lambda_1(\Omega) \geq \lambda_1(\Omega^*).
$$
\end{cor}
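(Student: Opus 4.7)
The plan is to reduce the statement to the corresponding isoperimetric inequality for the convolution operator $\mathcal{J}_\Omega$ already established in \cite{RS}. By Remark \ref{propeig}(d) together with Remark \ref{mu1}, the first eigenvalue of $\mathcal{B}_\Omega$ satisfies $\lambda_1(\Omega) = 1 - \mu_1(\Omega)$, where $\mu_1(\Omega) = \|\mathcal{J}_\Omega\|$ is the top (positive and simple) eigenvalue of $\mathcal{J}_\Omega$. Consequently, the inequality $\lambda_1(\Omega) \geq \lambda_1(\Omega^*)$ is equivalent to
$$\mu_1(\Omega) \leq \mu_1(\Omega^*),$$
that is, to the fact that the ball $\Omega^*$ is a \emph{maximizer} for the top eigenvalue of the convolution operator among open sets of given Lebesgue measure. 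This is precisely the Rayleigh--Faber--Krahn type statement for $\mathcal{J}_\Omega$ proved in \cite{RS} under hypotheses $(\mathrm{H})$, so the corollary follows by the substitution $\lambda \leftrightarrow 1-\mu$.

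For a self-contained route one can argue directly from the variational characterization \eqref{eq350}, which may be rewritten as
$$\lambda_1(\Omega) \;=\; 1 - \sup_{u \in W_\Omega \setminus \{0\}} \frac{\int_{\R^N}\int_{\R^N} J(x-y)\,u(x)\,u(y)\,dx\,dy}{\|u\|_{L^2(\R^N)}^2}.$$
Replacing $u$ by $|u|$ only increases the numerator, so one may restrict to $u \geq 0$. Let $u^*$ denote the spherically symmetric decreasing rearrangement of $u$; then $u^* \in W_{\Omega^*}$ and $\|u^*\|_{L^2} = \|u\|_{L^2}$. Since $J$ is nonnegative, radial and radially decreasing by $(\mathrm{H})$ (so $J^* = J$), the Riesz rearrangement inequality gives
$$\int_{\R^N}\int_{\R^N} J(x-y)\,u(x)\,u(y)\,dx\,dy \;\leq\; \int_{\R^N}\int_{\R^N} J(x-y)\,u^*(x)\,u^*(y)\,dx\,dy.$$
Passing to the supremum shows the sup over $W_\Omega$ is bounded by the sup over $W_{\Omega^*}$, so $\lambda_1(\Omega) \geq \lambda_1(\Omega^*)$.

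The main (minor) point to watch is the sign correspondence: the smallest eigenvalue of $\mathcal{B}_\Omega$ matches via $\lambda = 1 - \mu$ with the largest eigenvalue of $\mathcal{J}_\Omega$, so ``the ball is a minimizer for $\mathcal{B}_\Omega$'' is the same assertion as ``the ball is a maximizer for $\mathcal{J}_\Omega$''. No serious obstacle is anticipated: the analytic substance lives in \cite{RS} (or, in the alternative route, in the Riesz rearrangement inequality), and the translation between the two spectra is routine given Remark \ref{propeig}.
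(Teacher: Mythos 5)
Your first route is exactly the paper's proof: quote the Rayleigh--Faber--Krahn inequality for $\mathcal{J}_\Omega$ from \cite[Theorem 2.1]{RS} and translate it through the identity $\lambda_1(\Omega) = 1 - \mu_1(\Omega)$ from Remark \ref{propeig}. The self-contained alternative via \eqref{eq350} and the Riesz rearrangement inequality is also correct and is essentially the argument underlying the cited result, so it is a useful addition even though the paper simply invokes \cite{RS}.
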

\begin{proof}
It has been seen at \cite[Theorem 2.1]{RS} that the first eigenvalue $\mu_1(\Omega)$ of $\mathcal{J}_\Omega$ achieves its maximum among open sets of given volume at the ball $\Omega^*$. That is, $\mu_1(\Omega) \leq \mu_1(\Omega^*)$. Hence, we get the result from expression $\lambda_1(\Omega) = 1 - \mu_1(\Omega)$ given by Remark \ref{propeig}.
\end{proof}

 In Section \ref{deriS}, we give an example which shows that the first eigenvalue of \eqref{eigeneq} does not possess a maximizer among open bounded sets even with a fixed measure. 
Now we consider the minimizer of the second eigenvalue of $\mathcal{B}_\Omega$ among open sets of given 
volume. As we are going to see, the minimizer is no longer one ball, but the union of two identical balls whose mutual 
distance is going to infinity. It is an analogue of the Hong-Krahn-Szeg\"o inequality \cite{Henrot} and it has 
been proven in  \cite[Theorem 2.3]{RS} for the compact operator $\mathcal{J}_\Omega$. 
%We give a whole proof for the convenience of the reader. 
%
First, we prove the existence of $\lambda_2(\Omega)$ (and $\mu_2(\Omega)$) for any $\Omega \subset \R^N$. 

\begin{prop} \label{eigen2}
Under conditions $(\rm{H})$, we have ${\rm dim}({\rm R}(\mathcal{J}_\Omega)) \geq 2$. In particular, 
there exists $\lambda_2(\Omega)$ for any bounded open domain $\Omega \subset \R^N$.
\end{prop}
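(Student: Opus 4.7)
I would argue by contradiction. Since $\mu_1(\Omega)>0$ by Remark \ref{mu1}, we already have $\dim \mathrm{R}(\mathcal{J}_\Omega)\ge 1$. Assume, for contradiction, that this dimension equals one. Then Lemma \ref{lemI} applies with $m=1$, yielding a normalized eigenfunction $u_1\in L^2(\Omega)$ with nonzero eigenvalue $\mu_1$ such that
\[
J(x-y)=\mu_1\, u_1(x)\, u_1(y)\ \text{a.e.\ on }\Omega\times\Omega,\qquad J(0)\,|\Omega|=\mu_1 .
\]

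Next I would upgrade $u_1$ to a continuous function on $\Omega$. Because $\mu_1 u_1(x)=\int_\Omega J(x-y)\,u_1(y)\,dy$, and $J$ is continuous and bounded while $u_1\in L^1(\Omega)$, the right-hand side is continuous in $x$, so $u_1$ has a continuous representative on $\Omega$. With $u_1$ continuous, both sides of the factorization of $J$ above are continuous on $\Omega\times\Omega$, and therefore the equality holds everywhere. Taking $y=x$ for an arbitrary $x\in\Omega$ gives $J(0)=\mu_1\, u_1(x)^2$; combined with the strict positivity of $u_1$ guaranteed by Remark \ref{mu1}, this forces $u_1$ to be a positive constant on $\Omega$, and the $L^2$-normalization pins it down to $u_1\equiv 1/\sqrt{|\Omega|}$.

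Substituting back into the factorization, I obtain $J(x-y)=\mu_1/|\Omega|=J(0)$ for every $(x,y)\in\Omega\times\Omega$, i.e., $J\equiv J(0)$ on the open set $\Omega-\Omega$, which necessarily contains a ball $B_\delta$ around the origin. This contradicts hypothesis $(\mathrm{H})$: a continuous, nonnegative, spherically symmetric, radially decreasing function with $J(0)>0$ and $\int_{\R^N}J=1$ cannot be constant on a neighbourhood of the origin. Hence $\dim \mathrm{R}(\mathcal{J}_\Omega)\ge 2$, so counted with multiplicity $\mathcal{J}_\Omega$ admits a second nonzero eigenvalue $\mu_2(\Omega)$, and Remark \ref{propeig}(c) translates this into the existence of $\lambda_2(\Omega)=1-\mu_2(\Omega)$ in $\sigma(\mathcal{B}_\Omega)$.

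The step I expect to be the main obstacle is the last one: making rigorous the incompatibility of a plateau $J\equiv J(0)$ on a ball around $0$ with $(\mathrm{H})$. Concretely, one must interpret ``radially decreasing'' as strict monotonicity of the radial profile (or otherwise exploit $\int_{\R^N} J=1$ together with $J\to 0$ at infinity and the continuity of $J$) to rule out such a plateau. The remainder of the argument --- extracting the rank-one identity via Lemma \ref{lemI}, passing to continuous representatives, and deducing that $u_1$ must be the normalized constant --- is essentially routine.
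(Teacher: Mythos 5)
Your argument follows the same backbone as the paper's: invoke Lemma \ref{lemI} with $m=1$, set $x=y$ in \eqref{eq210} to obtain $J(0)=\mu_1 u_1(x)^2$, and conclude that $u_1$ is a positive constant. Where you diverge is in the final contradiction. The paper stops at ``$u_1$ constant,'' and declares this inconsistent with the variational identity \eqref{eq350} together with $\lambda_1(\Omega)=1-\mu_1(\Omega)>0$. You instead push the rank-one identity one step further: $J(x-y)=\mu_1 u_1(x)u_1(y)\equiv J(0)$ on $\Omega\times\Omega$, hence $J$ is constant on the open neighbourhood $\Omega-\Omega$ of the origin, which you then rule out using $(\mathrm{H})$. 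Your route is more self-contained (it never leaves the kernel $J$), and your intermediate step of upgrading $u_1$ to a continuous representative via the eigenvalue equation and dominated convergence is a clean way to turn the a.e.\ identity into a pointwise one.

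Your self-flagged obstacle is exactly the right one, and it is worth stressing that it is not cosmetic. If ``radially decreasing'' in $(\mathrm{H})$ is read non-strictly, a kernel that is flat ($J\equiv J(0)$) on a ball $B_\varepsilon$ and then decays, together with $\Omega=B_\delta$ for $2\delta<\varepsilon$, makes $\mathcal{J}_\Omega u(x)=J(0)\int_\Omega u\,dy$ independent of $x$, so $\operatorname{R}(\mathcal{J}_\Omega)$ really is one-dimensional and the proposition fails. In other words, some form of strict monotonicity of the radial profile near the origin is genuinely needed, not just a convenient hypothesis to close your argument; this also means the paper's own appeal to $\lambda_1(\Omega)>0$ cannot be the whole story, since that inequality still holds in the plateau example. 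So your proposal, with ``radially decreasing'' read as strictly decreasing near $0$, is a correct proof and in fact isolates the hypothesis on $J$ more sharply than the published one does.
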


\begin{proof}
Let us suppose that $\mathcal{J}_\Omega$ is a one dimensional linear space. Then, by Lemma \ref{lemI}, taking $x=y$ in \eqref{eq210}, we have that 
$J(0) = \mu_1(\Omega) (u_1(x))^2$ in $\Omega$ where $\mu_1(\Omega)$ is the first eigenvalue of $\mathcal{J}_\Omega$ with corresponding normalized eigenfunction $u_1 \in L^2(\Omega)$.
Hence, we conclude that $u_1$ is a strictly positive constant which is a contradiction, since it satisfies \eqref{eq350} with 
$\lambda_1(\Omega) = 1 - \mu_1(\Omega)>0$. Finally, as $\lambda_1(\Omega)$ is a simple eigenvalue, it follows that there exists at least another larger eigenvalue of $\mathcal{B}_\Omega$.
\end{proof}

Now, let us optimize the second eigenvalue. 
\begin{cor}
Under hypothesis $(\rm{H})$, the minimum of the second eigenvalue of \eqref{eigeneq} among all bounded open sets with given volume is achieved by the disjoint union of two identical balls with mutual distance attaching to infinity. 
\end{cor}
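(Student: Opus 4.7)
The plan is to transfer the Hong-Krahn-Szeg\"o type inequality established in \cite[Theorem~2.3]{RS} for $\mathcal{J}_\Omega$ to the Dirichlet operator $\mathcal{B}_\Omega$ via the affine correspondence $\lambda \leftrightarrow 1-\lambda$ between eigenvalues recorded in Remark \ref{propeig}(c). This mirrors exactly how Corollary \ref{corl1} for the first eigenvalue was derived from the analogous Rayleigh-Faber-Krahn inequality in \cite{RS}.

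First, I would invoke Proposition \ref{eigen2} to ensure that $\lambda_2(\Omega)$ is well defined: since ${\rm dim}({\rm R}(\mathcal{J}_\Omega)) \geq 2$, the operator $\mathcal{B}_\Omega$ admits at least two distinct eigenvalues, and I would enumerate them in increasing order $\lambda_1(\Omega) \leq \lambda_2(\Omega) \leq \cdots$ counted with multiplicity. By Remark \ref{propeig}(c) together with Remark \ref{mu1}, this ordering matches the enumeration of the largest eigenvalues of $\mathcal{J}_\Omega$ in decreasing order, the topmost being the simple $\mu_1(\Omega) = \|\mathcal{J}_\Omega\|$; in particular $\lambda_2(\Omega) = 1 - \mu_2(\Omega)$ for a suitably selected second eigenvalue $\mu_2(\Omega)$ of $\mathcal{J}_\Omega$.

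Next, I would quote \cite[Theorem~2.3]{RS}: among bounded open sets of prescribed volume $|\Omega|$, the second eigenvalue $\mu_2$ of $\mathcal{J}_\Omega$ is maximized, in the limiting sense, by $\Omega^{**}_d = B \cup (B + d\,e)$, where $B$ is a ball of volume $|\Omega|/2$ and $d \to +\infty$ along a fixed unit vector $e \in \R^N$. Applying $\lambda_2 = 1 - \mu_2$ reverses the inequality and yields
\begin{equation}
\lambda_2(\Omega) \;\geq\; \lim_{d \to +\infty} \lambda_2(\Omega^{**}_d),
\end{equation}
which is the claim.

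The main obstacle will be to make sure that the ``$\mu_2$'' optimized in \cite{RS} is precisely the quantity conjugate to $\lambda_2$ under Remark \ref{propeig}(c), since Remark \ref{EiPro} arranges the $\mu_i$'s by magnitude while the Hong-Krahn-Szeg\"o-type statement concerns the largest positive eigenvalue below $\mu_1$. This is handled by a short decoupling argument on $\Omega^{**}_d$: the off-diagonal block of $\mathcal{J}_{\Omega^{**}_d}$ coupling the two balls has operator norm bounded by $\int_{|z|\geq d-\mathrm{diam}(B)} J(z)\,dz$, which tends to zero as $d\to+\infty$. Combined with the continuity of the spectrum (Theorem \ref{singleeig} and the perturbation theory of \cite{Kato}), this shows that the two top eigenvalues $\mu_1(\Omega^{**}_d)$ and $\mu_2(\Omega^{**}_d)$ both converge to $\mu_1(B) > 0$, so they correspond unambiguously to $\lambda_1(\Omega^{**}_d)$ and $\lambda_2(\Omega^{**}_d)$ under $\lambda = 1 - \mu$ in the limit, and the desired minimizing configuration for $\lambda_2$ is recovered.
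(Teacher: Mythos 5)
Your proposal takes essentially the same approach as the paper's proof: invoke the Hong--Krahn--Szeg\"o type inequality of \cite[Theorem~2.3]{RS} for $\mathcal{J}_\Omega$ and transport it to $\mathcal{B}_\Omega$ through the affine correspondence $\lambda_2(\Omega) = 1 - \mu_2(\Omega)$ coming from Remark~\ref{propeig}. The extra decoupling argument you add to reconcile the magnitude-ordering of Remark~\ref{EiPro} with the value-ordering used in \cite{RS} is not present in the paper but is a sound sanity check; it is also somewhat more than is needed, since once one orders the $\mu_i$ decreasingly by value (rather than by magnitude) the bijection $\mu \mapsto 1-\mu$ between $\sigma(\mathcal{J}_\Omega)$ and $\sigma(\mathcal{B}_\Omega)$ already identifies the second largest $\mu$ with the second smallest $\lambda$ directly.
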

\begin{proof}
The result is a direct consequence of the expression $\lambda_2(\Omega) = 1 - \mu_2(\Omega)$ and \cite[Theorem 2.3]{RS} where it has been proved that the maximum of $\mu_2(\Omega)$ is achieved in a disjoint union of identical balls with mutual distance going to infinity. 
\end{proof}

\section{Continuity of eigenvalues} \label{contS}

In this section we discuss the continuity of the eigenvalues with respect to $\Omega \subset \R^N$. Notice that 
this is not a trivial task since any change of $\Omega$ causes a change on the operator domain. In order to 
overcome this problem, we extend $\mathcal{J}_\Omega$ into a $L^2(D)$ for a larger bounded set $D \subset 
\R^N$.

Let us take $\Omega \subset D$. We define $\tilde{\mathcal{J}}_\Omega: L^2(D) \mapsto L^2(D)$ by 
$$
\tilde{\mathcal{J}}_\Omega u(x) = 
\left\{
\begin{array}{ll}
\mathcal{J}_\Omega u(x) & x \in \Omega \\
0 & x \in D \setminus \Omega
\end{array}
\right. .
$$
Notice that $\tilde{\mathcal{J}}_\Omega u(x) = {\mathcal{J}}_\Omega u(x)$ for all $x \in \Omega$, and then,  $
\tilde{\mathcal{J}}_\Omega$ is
an extension of $\mathcal{J}_\Omega$ into $L^2(D)$.
It is not difficult to see that $\tilde{\mathcal{J}}_\Omega$ is a compact and self-adjoint operator  acting on $L^2(D)$ 
with 
$$
\| \tilde{\mathcal{J}}_\Omega \|_{L^2(D)} \leq |\Omega| \| J \|_{L^\infty(\R^N)}.
$$ 

Thus, we can argue as in Remark \ref{EiPro} getting from \cite[Theorem 2.10 Chapter V]{Kato} that 
$\sigma(\tilde{\mathcal{J}}_{\Omega})$ consists of at most a countable number of real eigenvalues with finite 
multiplicities, possibly excepting zero.
We also enumerate their eigenvalues in decreasing order of magnitude 
$$
|\tilde{\mu}_1| \geq |\tilde{\mu}_2| \geq ...
$$
If $\tilde{P}_1$, $\tilde{P}_2$, ... are the associated eigenprojections, then $\tilde{P}_i$ are orthogonal and self-adjoint 
with finite dimensional range. 
Finally, we also get a spectral representation 
$$
\tilde{\mathcal{J}}_{\Omega} = \sum_{i\geq1} \tilde{\mu}_i \tilde{P}_i
$$ 
in the sense of convergence in norm with projections forming a complete orthogonal family together with the 
orthogonal projection $\tilde{P}_0$ on the null space of $\tilde{\mathcal{J}}_{\Omega}$. 

In the sequel, we first get conditions, in order to guarantee the continuity of the operators $\tilde{\mathcal{J}}
_\Omega$ with respect to $\Omega$. 
Next, we notice that the nonzero eigenvalues of $\tilde{\mathcal{J}}_\Omega$ and ${\mathcal{J}}_\Omega$ are 
equal. Here we study continuity via abstract results concerning  perturbations for linear operators dealt in \cite{Kato}.

\begin{lemma} \label{lemJt}
Let $\Omega_1$, $\Omega_2$ be two bounded open sets in $\R^N$. 
Then, there exists $C>0$ depending only on the measure of $\Omega_1$ and $\Omega_2$ such that 
$$
\| \tilde{\mathcal{J}}_{\Omega_1} - \tilde{\mathcal{J}}_{\Omega_2} \|_{D} \leq C \| J \|_{L^\infty(\R^N)} \left[  |
\Omega_1 \setminus \Omega_2| + |\Omega_2 \setminus \Omega_1| \right]^{1/2}.
$$
In particular,  
$$
\| \tilde{\mathcal{J}}_{\Omega_1} - \tilde{\mathcal{J}}_{\Omega_2} \|_{L^2(D)} \to 0
$$  
if $|\Omega_1 \setminus \Omega_2| + |\Omega_2 \setminus \Omega_1| \to 0$ and $\| J \|_{L^\infty(\R^N)} < \infty$.
\end{lemma}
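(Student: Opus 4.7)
The plan is to estimate $\|\tilde{\mathcal{J}}_{\Omega_1}u - \tilde{\mathcal{J}}_{\Omega_2}u\|_{L^2(D)}$ for an arbitrary test function $u \in L^2(D)$ by splitting the ambient set $D$ into the four pieces on which the operator $\tilde{\mathcal{J}}_{\Omega_i}$ takes structurally distinct forms: the intersection $\Omega_1 \cap \Omega_2$, the two ``symmetric-difference'' sets $\Omega_1 \setminus \Omega_2$ and $\Omega_2 \setminus \Omega_1$, and the complement $D \setminus (\Omega_1 \cup \Omega_2)$. On the last piece the two operators both vanish, so the difference is zero and that portion contributes nothing to the $L^2(D)$ norm.

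Next, I would write out the integrands explicitly on the remaining pieces. On $\Omega_1 \setminus \Omega_2$ only $\tilde{\mathcal{J}}_{\Omega_1}$ is active, so the difference equals $\int_{\Omega_1} J(x-y)\, u(y)\, dy$; symmetrically on $\Omega_2 \setminus \Omega_1$. On $\Omega_1 \cap \Omega_2$ the two bulk integrals cancel on $\Omega_1 \cap \Omega_2$, leaving $\int_{\Omega_1 \setminus \Omega_2} J(x-y) u(y)\, dy - \int_{\Omega_2 \setminus \Omega_1} J(x-y) u(y)\, dy$. For each of these inner integrals I would apply Cauchy--Schwarz together with the pointwise bound $J(x-y) \leq \|J\|_{L^\infty(\R^N)}$, obtaining estimates of the form
$$
\Big|\int_{A} J(x-y) u(y)\, dy \Big|^2 \leq \|J\|_{L^\infty(\R^N)}^2 \, |A| \, \|u\|_{L^2(D)}^2
$$
for $A$ equal to $\Omega_1$, $\Omega_2$, $\Omega_1\setminus\Omega_2$ or $\Omega_2\setminus\Omega_1$ respectively.

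Then I would integrate these pointwise bounds over the corresponding outer set of $x$'s, producing factors of $|\Omega_1\cap\Omega_2|$, $|\Omega_1\setminus\Omega_2|$ and $|\Omega_2\setminus\Omega_1|$. Using $|\Omega_1\cap\Omega_2| \leq \min(|\Omega_1|,|\Omega_2|)$, collecting terms and taking a square root yields an inequality of the form
$$
\|\tilde{\mathcal{J}}_{\Omega_1}u - \tilde{\mathcal{J}}_{\Omega_2}u\|_{L^2(D)} \leq C \|J\|_{L^\infty(\R^N)} \bigl[|\Omega_1\setminus\Omega_2| + |\Omega_2\setminus\Omega_1|\bigr]^{1/2} \|u\|_{L^2(D)},
$$
with $C$ depending only on $|\Omega_1|$ and $|\Omega_2|$. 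Taking the supremum over $u$ with $\|u\|_{L^2(D)} = 1$ gives the operator-norm bound, and the ``in particular'' assertion is then immediate.

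I do not anticipate a genuine obstacle; the argument is a careful bookkeeping of the domain decomposition combined with Cauchy--Schwarz. The only mild subtlety is making sure the constant $C$ depends only on the measures of $\Omega_1$ and $\Omega_2$ (and not on $D$), which is achieved by bounding $|\Omega_1 \cap \Omega_2|$ by $|\Omega_1|$ (or $|\Omega_2|$) rather than by $|D|$ when integrating the cross terms on $\Omega_1 \cap \Omega_2$.
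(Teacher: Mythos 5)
Your proposal is correct and follows essentially the same route as the paper: the same case decomposition of $D$ into $\Omega_1\cap\Omega_2$, $\Omega_1\setminus\Omega_2$, $\Omega_2\setminus\Omega_1$ and the complement, the same pointwise Cauchy--Schwarz bounds on each piece (with the smallness on the intersection coming from the inner integrals over the symmetric-difference sets, and on the two difference pieces from the outer integration domain), and the same collection of terms. The only cosmetic difference is that you bound $|\Omega_1\cap\Omega_2|$ by $\min(|\Omega_1|,|\Omega_2|)$ whereas the paper uses $|\Omega_1\cup\Omega_2|$; either yields a constant depending only on $|\Omega_1|$ and $|\Omega_2|$.
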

\begin{proof}
Notice that
$$
\tilde{\mathcal{J}}_{\Omega_1}u(x) - \tilde{\mathcal{J}}_{\Omega_2}u(x) = 
\left\{
\begin{array}{cc}
\int_{\Omega_1} J(x-y) u(y) dy - \int_{\Omega_2} J(x-y) u(y) dy, & x \in \Omega_1 \cap \Omega_2, \\
\int_{\Omega_1} J(x-y) u(y) dy, & x \in \Omega_1 \setminus \Omega_2, \\
- \int_{\Omega_2} J(x-y) u(y) dy, & x \in \Omega_2 \setminus \Omega_1.
\end{array}
\right.
$$
Hence, if $x \in \Omega_1 \cap \Omega_2$, we get 
\begin{eqnarray*}
| \tilde{\mathcal{J}}_{\Omega_1}u(x) - \tilde{\mathcal{J}}_{\Omega_2}u(x) | & \leq & 
\| J \|_{L^\infty(\R^N)} \left[  \int_{\Omega_1 \setminus \Omega_2}  J(x-y) |u(y)| dy + \int_{\Omega_2 \setminus \Omega_1}  J(x-y) |u(y)| dy \right] \\
& \leq & \| J \|_{L^\infty(\R^N)} \| u \|_{L^2(D)} \left[  |\Omega_1 \setminus \Omega_2|^{1/2} + |\Omega_2 \setminus \Omega_1|^{1/2} \right].
\end{eqnarray*}

On the other hand, if $x \in (\Omega_1 \setminus \Omega_2) \cup (\Omega_2 \setminus \Omega_1)$, 
$$
| \tilde{\mathcal{J}}_{\Omega_1}u(x) - \tilde{\mathcal{J}}_{\Omega_2}u(x) |  \leq 
\| J \|_{L^\infty(\R^N)} \| u \|_{L^2(D)} \left( |\Omega_1|^{1/2} + |\Omega_2|^{1/2}  \right).
$$

Consequently, 
\begin{eqnarray*}
\int_{D} | \tilde{\mathcal{J}}_{\Omega_1}u(x) - \tilde{\mathcal{J}}_{\Omega_2}u(x) |^2 dx  & \leq & 
\int_{\Omega_1 \cup \Omega_2} | \tilde{\mathcal{J}}_{\Omega_1}u(x) - \tilde{\mathcal{J}}_{\Omega_2}u(x) |^2 dx \\
& \leq & \| J \|_{L^\infty}^2 \| u \|_{L^2(D)}^2 \left(  |\Omega_1 \setminus \Omega_2|^{1/2}  
+ |\Omega_2 \setminus \Omega_1|^{1/2}  \right)^2 |\Omega_1 \cap \Omega_2| \\
& & + \| J \|_{L^\infty}^2 \| u \|_{L^2(D)}^2 \left( |\Omega_1|^{1/2} + |\Omega_2|^{1/2}  \right)^2 \left( |\Omega_1 
\setminus \Omega_2| + |\Omega_2 \setminus \Omega_1| \right) \\
& \leq & 2 \| J \|_{L^\infty}^2 \| u \|_{L^2(D)}^2 \left( |\Omega_1 \setminus \Omega_2| 
+ |\Omega_2 \setminus \Omega_1| \right) \left(  |\Omega_1 \cup \Omega_2| + |\Omega_1| + |\Omega_2|  \right) 
\end{eqnarray*}
proving the result. 
\end{proof}

Next, let us see that the sets of nonzero eigenvalues of 
$\tilde{\mathcal{J}}_{\Omega}$ and ${\mathcal{J}}_{\Omega}$ are equal. 
\begin{lemma} \label{lemig}
A nonzero value $\mu$ is an eigenvalue of the operator $\tilde{\mathcal{J}}_{\Omega}$, if and only if, it is a nonzero 
eigenvalue for ${\mathcal{J}}_{\Omega}$. Furthermore, we have that their multiplicity is preserved. 
\end{lemma}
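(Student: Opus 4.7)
The plan is to exhibit an explicit isomorphism between the $\mu$-eigenspaces of $\mathcal{J}_\Omega$ and $\tilde{\mathcal{J}}_\Omega$ given, in one direction, by extension by zero and, in the other, by restriction to $\Omega$. This will simultaneously establish that the two nonzero spectra coincide and that multiplicities agree.

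First I would verify the forward implication. Suppose $\mu \neq 0$ is an eigenvalue of $\mathcal{J}_\Omega$ with eigenfunction $u \in L^2(\Omega)$, and let $\tilde u \in L^2(D)$ be its extension by zero to $D \setminus \Omega$. Since the convolution $\int_\Omega J(x-y)\tilde u(y)\,dy = \int_\Omega J(x-y)u(y)\,dy$ depends only on the restriction of $\tilde u$ to $\Omega$, one checks immediately from the definition of $\tilde{\mathcal{J}}_\Omega$ that $\tilde{\mathcal{J}}_\Omega \tilde u = \mu u$ on $\Omega$, while on $D\setminus\Omega$ both sides vanish (the left by construction of $\tilde{\mathcal{J}}_\Omega$, the right because $\tilde u \equiv 0$ there). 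Hence $\tilde u$ is an eigenfunction of $\tilde{\mathcal{J}}_\Omega$ for $\mu$, and $\tilde u \neq 0$.

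For the reverse direction, suppose $\mu \neq 0$ is an eigenvalue of $\tilde{\mathcal{J}}_\Omega$ with eigenfunction $v \in L^2(D)$, $v \neq 0$. The key observation is that $\tilde{\mathcal{J}}_\Omega v \equiv 0$ on $D\setminus\Omega$ by definition, so the eigenvalue equation forces $\mu v(x) = 0$ a.e.\ on $D\setminus \Omega$; since $\mu \neq 0$, this gives $v \equiv 0$ a.e.\ on $D \setminus \Omega$. Setting $u := v\rs{\Omega}$, we have $u \neq 0$ in $L^2(\Omega)$, and on $\Omega$ the identity $\mathcal{J}_\Omega u(x) = \int_\Omega J(x-y) v(y)\,dy = \tilde{\mathcal{J}}_\Omega v(x) = \mu v(x) = \mu u(x)$ shows that $u$ is an eigenfunction of $\mathcal{J}_\Omega$ for $\mu$.

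Finally, for multiplicity, I would point out that the extension-by-zero map and the restriction-to-$\Omega$ map are linear, and the argument above shows they are mutually inverse bijections between the eigenspaces $\ker(\mathcal{J}_\Omega - \mu I)$ and $\ker(\tilde{\mathcal{J}}_\Omega - \mu I)$; hence the geometric multiplicities coincide, which, since both operators are self-adjoint and compact, is the algebraic multiplicity as well. There is no real obstacle here: the argument hinges on the single observation that $\tilde{\mathcal{J}}_\Omega v$ is supported in $\overline\Omega$, which for $\mu\neq 0$ forces any eigenfunction to also be supported in $\overline\Omega$; the only point to state carefully is that $\mu \neq 0$ is used precisely in this step.
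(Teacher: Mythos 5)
Your proof is correct and takes essentially the same approach as the paper: extension by zero in one direction, restriction in the other, with the key observation that $\mu\neq 0$ forces an eigenfunction of $\tilde{\mathcal{J}}_\Omega$ to vanish on $D\setminus\Omega$. You spell out the multiplicity claim a bit more explicitly (mutually inverse linear bijections between eigenspaces), but this is just making precise what the paper leaves implicit.
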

\begin{proof}
We have that  $\mu \neq 0$ is an eigenvalue of $\tilde{\mathcal{J}}_{\Omega}$, if and only if, there exists $u \neq 0$ 
in $L^2(D)$ with
$$
\tilde{\mathcal{J}}_{\Omega} u(x) = \mu u(x), \quad x \in D \subset \R^N.
$$
Thus, from definition of $\tilde{\mathcal{J}}_{\Omega}$, we get 
$$
{\mathcal{J}}_{\Omega} u(x) = \mu u(x), \quad x \in \Omega, 
$$
with $u(x) \equiv 0$ in $D \setminus \Omega$ since $\mu \neq 0$.
Consequently, $\mu$ is also  an eigenvalue of ${\mathcal{J}}_{\Omega}$ with corresponding eigenfunction $u$.
On the other hand,  if $\mu \neq 0$ is an eigenvalue of ${\mathcal{J}}_{\Omega}$ with corresponding 
nonzero $u \in L^2(\Omega)$, we have that the extension by zero of $u$ into $L^2(D)$ is also an 
eigenfunction of $\tilde{\mathcal{J}}_{\Omega}$ associated to $\mu$, completing the proof.
\end{proof}

Now, let $s_T = \{ \lambda_{p_1}, ..., \lambda_{p_k} \}$ be a collection of finite eigenvalues of a compact and 
self-adjoint operator $T$ and $P_{p_1}$, ..., $P_{p_k}$ their associated orthogonal eigenprojections.  We say that 
$s_T$ is a finite system of eigenvalues with multiplicity $m \in \N$, if the range ${\rm R}(P_{p_i})$ of $P_{p_i}$ is 
finite and satisfies 
$$\sum_{i=1}^{k} {\rm dim}({\rm R}(P_{p_i})) = m.$$ 
Notice we can associate to $s_T$ an orthogonal projection $P_{s_T}$ given by $P_{s_T} = \sum_i P_{p_i}$.
If in addition, all eigenvalues of $s_T$ are simple, we call $s_T$ a finite system of simple eigenvalues.

Our next result shows the persistence of a finite system of eigenvalues for $\tilde{\mathcal{J}}_{\Omega}$ when we 
perturb $\Omega$. As we shall see, this is a direct consequence of the continuity of the operators with respect to 
$\Omega$ in norm and abstract results from perturbation theory of linear operators shown in \cite{Kato}.

%%%%%%%%% ACA VOY REVISANDO%%%%%%%%%%%%%%

\begin{lemma} \label{theofse}
Let $s_{\tilde{\mathcal{J}}_{\Omega}} \subset \sigma(\tilde{\mathcal{J}}_{\Omega})$ be a finite system of eigenvalues with multiplicity $m \in \N$ and $\mathcal{V} \subset \R$ a neighborhood of $s_{\tilde{\mathcal{J}}_{\Omega}}$.
Then, for all $\varepsilon > 0$, there exist $\delta>0$ and a neighborhood $\mathcal{V}_\varepsilon \subset \mathcal{V}$ of $s_{\tilde{\mathcal{J}}_{\Omega}}$ depending on $s_{\tilde{\mathcal{J}}_{\Omega}}$, $\mathcal{V}$ and $\tilde{\mathcal{J}}_{\Omega}$, such that, if $\tilde \Omega \subset D \subset \R^N$ satisfies 
\begin{equation} \label{eqdelta}
|\Omega \setminus \tilde \Omega| + |\tilde \Omega \setminus \Omega| < \delta 
\end{equation}
then, $\tilde{\mathcal{J}}_{\tilde \Omega}$ also has a finite system of eigenvalues $s_{\tilde{\mathcal{J}}_{\tilde \Omega}}$ with multiplicity $m$ and $ s_{\tilde{\mathcal{J}}_{\tilde \Omega}} \subset \mathcal{V}_\varepsilon$.
Furthermore, the orthogonal projections $P_{s_{\tilde{\mathcal{J}}_{\Omega}}}$ and $P_{s_{\tilde{\mathcal{J}}_{\tilde \Omega}}}$ associated to the finite systems  $s_{\tilde{\mathcal{J}}_{ \Omega}}$ and $s_{\tilde{\mathcal{J}}_{\tilde \Omega}}$ satisfy 
$\| P_{s_{\tilde{\mathcal{J}}_{\Omega}}} - P_{s_{\tilde{\mathcal{J}}_{\tilde \Omega}}} \|_{L^2(D)} < \varepsilon$.
\end{lemma}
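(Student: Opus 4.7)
The plan is to realize the finite system $s_{\tilde{\mathcal{J}}_{\Omega}}$ as the part of the spectrum enclosed by a suitable contour, and then to track this contour through a norm-small perturbation using the Riesz projection formula. Since the eigenvalues making up $s_{\tilde{\mathcal{J}}_{\Omega}}$ are nonzero (nonzero spectrum of a compact self-adjoint operator is discrete), they are isolated in $\sigma(\tilde{\mathcal{J}}_{\Omega})$, so I can select a simple positively oriented rectifiable curve $\Gamma \subset \mathbb{C}$ which encircles exactly the points of $s_{\tilde{\mathcal{J}}_{\Omega}}$, lies inside $\mathcal{V}$, and satisfies $\mathrm{dist}(\Gamma, \sigma(\tilde{\mathcal{J}}_{\Omega})) = d > 0$. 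I will call $\mathcal{V}_\varepsilon$ the connected open component of the interior of $\Gamma$ (contained in $\mathcal{V}$), and first shrink $\Gamma$ enough that it also lies in an $\varepsilon$-tube around $s_{\tilde{\mathcal{J}}_{\Omega}}$.

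Next, I would write the spectral projection by the Dunford--Riesz formula
\begin{equation}
P_{s_{\tilde{\mathcal{J}}_{\Omega}}} \;=\; -\frac{1}{2\pi i}\oint_{\Gamma}\bigl(\tilde{\mathcal{J}}_{\Omega}-z I\bigr)^{-1}\,dz,
\end{equation}
and use Lemma \ref{lemJt} to force, for some $\delta>0$, the bound $\| \tilde{\mathcal{J}}_{\Omega}-\tilde{\mathcal{J}}_{\tilde\Omega}\|_{L^2(D)} < d/2$ whenever $|\Omega\setminus\tilde\Omega|+|\tilde\Omega\setminus\Omega|<\delta$. A standard Neumann-series argument then shows that $\Gamma$ lies in $\rho(\tilde{\mathcal{J}}_{\tilde\Omega})$ and that the resolvents converge uniformly on $\Gamma$:
\begin{equation}
\sup_{z\in\Gamma}\bigl\|(\tilde{\mathcal{J}}_{\tilde\Omega}-zI)^{-1}-(\tilde{\mathcal{J}}_{\Omega}-zI)^{-1}\bigr\|_{L^2(D)}\;\longrightarrow\;0
\end{equation}
as $\delta\to 0$. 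Defining $P_{s_{\tilde{\mathcal{J}}_{\tilde\Omega}}}$ by the same contour integral with $\tilde{\mathcal{J}}_{\tilde\Omega}$ in place of $\tilde{\mathcal{J}}_{\Omega}$, this is an orthogonal projection (by self-adjointness) onto the spectral subspace of $\tilde{\mathcal{J}}_{\tilde\Omega}$ associated to the portion of $\sigma(\tilde{\mathcal{J}}_{\tilde\Omega})$ enclosed by $\Gamma$, and one obtains $\|P_{s_{\tilde{\mathcal{J}}_{\Omega}}}-P_{s_{\tilde{\mathcal{J}}_{\tilde\Omega}}}\|_{L^2(D)}<\varepsilon$ by further shrinking $\delta$.

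To conclude that the total multiplicity is preserved, I would invoke the classical stability of rank for projections: if $P,Q$ are bounded projections with $\|P-Q\|<1$, then $\dim R(P)=\dim R(Q)$ (Kato, Chapter I, Lemma 4.10). Applying this with $P=P_{s_{\tilde{\mathcal{J}}_{\Omega}}}$, whose range has dimension $m$ by hypothesis, gives $\dim R(P_{s_{\tilde{\mathcal{J}}_{\tilde\Omega}}})=m$. Since the eigenvalues of the compact self-adjoint operator $\tilde{\mathcal{J}}_{\tilde\Omega}$ inside $\Gamma$ account exactly for this range, the collection $s_{\tilde{\mathcal{J}}_{\tilde\Omega}}$ of those eigenvalues forms a finite system of total multiplicity $m$ inside $\mathcal{V}_\varepsilon\subset\mathcal{V}$, as required.

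The main technical obstacle is the geometric setup of $\Gamma$: one must choose it so that it simultaneously (i) lies in the prescribed neighborhood $\mathcal{V}$, (ii) witnesses the $\varepsilon$-localization of the perturbed eigenvalues, and (iii) has a quantitative gap $d>0$ from $\sigma(\tilde{\mathcal{J}}_{\Omega})$ that controls the Neumann series. Since the only possible accumulation point of $\sigma(\tilde{\mathcal{J}}_{\Omega})$ is $0$ and the eigenvalues in $s_{\tilde{\mathcal{J}}_{\Omega}}$ are nonzero, such a choice is always possible, but one must pick $\Gamma$ before fixing $\delta$ because $d$ determines the admissible norm perturbation, and then $\delta$ is chosen from Lemma \ref{lemJt} so that both the resolvent gap and the projection bound $<\varepsilon$ are satisfied.
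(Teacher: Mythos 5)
Your approach is essentially the one the paper takes: both rely on realizing the spectral projection as a Riesz--Dunford contour integral and then transferring it under the norm perturbation furnished by Lemma~\ref{lemJt}. The paper packages the resolvent stability and dimension preservation into citations of Kato's Theorems IV.2.23 and IV.3.16, whereas you unpack the Neumann-series resolvent estimate and then invoke the rank-stability lemma ($\|P-Q\|<1 \Rightarrow \dim R(P) = \dim R(Q)$) explicitly; that is a perfectly legitimate reformulation.

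There is one small but genuine geometric gap. You posit a \emph{single} simple closed curve $\Gamma$ enclosing exactly the eigenvalues of $s_{\tilde{\mathcal{J}}_\Omega}$ and no other part of $\sigma(\tilde{\mathcal{J}}_\Omega)$. A finite system $s_{\tilde{\mathcal{J}}_\Omega}$ need not be a ``cluster'': if, say, $s=\{\mu_1,\mu_3\}$ but $\mu_2\in\sigma(\tilde{\mathcal{J}}_\Omega)\setminus s$ lies between them on the real line, no single simple closed curve in $\mathbb{C}$ can separate $\{\mu_1,\mu_3\}$ from $\mu_2$. The correct object is a \emph{cycle}: a finite union of pairwise disjoint simple closed curves, one around each point (or each cluster) of $s$. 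This is exactly what the paper does, constructing disjoint open disks $B_i$ with boundaries $\Gamma_i$, applying Kato's theorems on each, and taking $\delta=\min_i\delta_i$ and $\mathcal{V}_\varepsilon=(\cup_i B_i)\cap\mathcal{V}$. With $\Gamma$ replaced by such a cycle, every step of your argument --- the resolvent gap $d>0$, the Neumann-series bound, the uniform resolvent convergence on $\Gamma$, the projection estimate $\|P-Q\|<\varepsilon$, and the rank-stability conclusion --- goes through verbatim, and the multi-disk framing also sets up the refinement needed later in Corollary~\ref{corsimple}, where individual simple eigenvalues must be tracked separately.
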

\begin{proof}
Since $s_{\tilde{\mathcal{J}}_{\Omega}}$ is a finite collection of eigenvalues and $\mathcal{V}$ is a given neighborhood, we can construct a finite collection of disjoint open disks $B_i$ in $\mathbb{C}$ with radius $r_i>0$ such that $s_{\tilde{\mathcal{J}}_{\Omega}} \subset (\cup_i \bar B_i) \cap \R \subset \mathcal{V}$ and $B_i \cap s_{\tilde{\mathcal{J}}_{\Omega}} = \tilde{\mu}_i(\Omega)$ for some eigenvalue $\tilde{\mu}_i(\Omega)$ of $\tilde{\mathcal{J}}_{\Omega}$.
For each $i$, let us consider the circle $\Gamma_i$ given by the boundary  $\partial B_i$ of $B_i$. Hence, for each $i$, we can separate $\sigma(\tilde{\mathcal{J}}_{\Omega})$ in two natural parts $\sigma_{i,1}(\tilde{\mathcal{J}}_{\Omega})$ and $\sigma_{i,2}(\tilde{\mathcal{J}}_{\Omega})$ where 
$\sigma_{i,1}(\tilde{\mathcal{J}}_{\Omega}) = \sigma(\tilde{\mathcal{J}}_{\Omega}) \cap B_i$
and 
$\sigma_{i,2}(\tilde{\mathcal{J}}_{\Omega}) = \sigma(\tilde{\mathcal{J}}_{\Omega}) \cap \bar{B}_i^c$,
and $L^2(\Omega) = M_{1,i} \oplus M_{i,2}$ where $M_{1,i}$ is the range of the orthogonal projection associated to $\tilde{\mu}_i(\Omega) \in B_i$, and $M_{2,i}$ is the enumerate union of all ranges given by the others eigenprojections and kernel of $\tilde{\mathcal{J}}_{\Omega}$.

It follows from Lemma \ref{lemJt}, \cite[Theorem 2.23, page 206]{Kato} and \cite[Theorem 3.16, page 212]{Kato} that, for all $\varepsilon>0$, there exist $\delta_i$ and $r_i>0$ depending just on $\tilde{\mathcal{J}}_{\Omega}$ and $\Gamma_i$ such that, if $\tilde \Omega$ satisfies \eqref{eqdelta}, then $\sigma(\tilde{\mathcal{J}}_{\tilde \Omega} )$ can be likewise separated by $\Gamma_i$ in two parts $\sigma_{i,1}(\tilde{\mathcal{J}}_{\tilde \Omega})$ and $\sigma_{i,2}(\tilde{\mathcal{J}}_{\tilde \Omega})$ with associated decomposition $L^2(\Omega) = \tilde M_{1,i} \oplus \tilde M_{i,2}$. 
$\tilde M_{1,i}$ and $\tilde M_{i,2}$ are respectively isomorphic with $M_{1,i}$ and $M_{i,2}$ and corresponding orthogonal projections $\varepsilon$-closed in operator norm. 
In particular, ${\rm dim}(\tilde M_{1,i}) = {\rm dim}(M_{1,i})$ and ${\rm dim}(\tilde M_{2,i}) = {\rm dim}(M_{2,i})$ and both $\sigma_{i,1}(\tilde{\mathcal{J}}_{\tilde \Omega})$ and $\sigma_{i,2}(\tilde{\mathcal{J}}_{\tilde \Omega})$ are nonempty if this is true for $\tilde{\mathcal{J}}_{\Omega}$.
Since we are considering a finite collection of eigenvalues, the result follows taking $\delta = \min_i \{ \delta_i \}$ and $\mathcal{V}_\varepsilon = \left( \cup_i B_i \right) \cap \mathcal{V}$.
\end{proof}

As a direct consequence of Remark \ref{propeig} and Lemmas \ref{lemig} and \ref{theofse}, we obtain the continuity of a finite system of eigenvalues for the  operators ${\mathcal{J}}_{\Omega}$ and ${\mathcal{B}}_{\Omega}$. We have the following result.
\begin{theorem} \label{theisy}
Let $s_{{\mathcal{J}}_{\Omega}} \subset \sigma({\mathcal{J}}_{\Omega})$ be a finite system of eigenvalues with multiplicity $m \in \N$ and $\mathcal{V} \subset \R$ a neighborhood of $s_{{\mathcal{J}}_{\Omega}}$.

Then, for all $\varepsilon > 0$, there exist $\delta>0$ and a neighborhood $\mathcal{V}_\varepsilon \subset \mathcal{V}$ of $s_{{\mathcal{J}}_{\Omega}}$ depending on $s_{{\mathcal{J}}_{\Omega}}$, $\mathcal{V}$ and ${\mathcal{J}}_{\Omega}$ such that, if $\tilde \Omega \subset D \subset \R^N$ satisfies 
\begin{equation} \label{eqdelta}
|\Omega \setminus \tilde \Omega| + |\tilde \Omega \setminus \Omega| < \delta 
\end{equation}
then, ${\mathcal{J}}_{\tilde \Omega}$ also has a finite system of eigenvalues $s_{{\mathcal{J}}_{\tilde \Omega}}$ with multiplicity $m$ and $ s_{{\mathcal{J}}_{\tilde \Omega}} \subset \mathcal{V}_\varepsilon$.

Furthermore, if $s_{{\mathcal{B}}_{\Omega}}$ is also a finite system of eigenvalues with multiplicity $m \in \N$ for the operator ${\mathcal{B}}_{\Omega}$, we have, under the  same condition \eqref{eqdelta}, the existence of a finite system of eigenvalues $s_{{\mathcal{B}}_{\tilde \Omega}} \subset \mathcal{V}_\varepsilon$ with multiplicity $m$.
\end{theorem}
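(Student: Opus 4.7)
The plan is to regard Theorem \ref{theisy} as a corollary obtained by chaining together the three tools already established: the norm continuity of the extended operators (Lemma \ref{lemJt}), the spectral identification (Lemma \ref{lemig}), and the abstract persistence result (Lemma \ref{theofse}), together with the simple spectral correspondence from Remark \ref{propeig}(c). So I would not attempt any fresh estimate; instead I would just set up the dictionary between $\mathcal{J}_\Omega$, $\tilde{\mathcal{J}}_\Omega$, and $\mathcal{B}_\Omega$ carefully.

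First, I would observe that the hypothesis forces $s_{\mathcal{J}_\Omega}$ to consist only of nonzero eigenvalues: any eigenprojection of $\mathcal{J}_\Omega$ at $\mu=0$ has infinite-dimensional range (by Remark \ref{EiPro2}, the null space is either nontrivial and infinite dimensional when the rank is finite, or zero lies in the essential spectrum), which is incompatible with total multiplicity $m\in\mathbb{N}$. After possibly shrinking $\mathcal{V}$ so that $0\notin \overline{\mathcal{V}}$, Lemma \ref{lemig} then says that $s_{\mathcal{J}_\Omega}$ is also a finite system of eigenvalues of $\tilde{\mathcal{J}}_\Omega$ with the same multiplicity $m$, via the zero-extension isomorphism of eigenspaces.

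Next, I would apply Lemma \ref{theofse} to $\tilde{\mathcal{J}}_\Omega$ with this same neighborhood $\mathcal{V}$ and the given $\varepsilon>0$, producing $\delta>0$ and $\mathcal{V}_\varepsilon\subset\mathcal{V}$ such that whenever $|\Omega\setminus\tilde\Omega|+|\tilde\Omega\setminus\Omega|<\delta$ the operator $\tilde{\mathcal{J}}_{\tilde\Omega}$ carries a finite system $s_{\tilde{\mathcal{J}}_{\tilde\Omega}}\subset\mathcal{V}_\varepsilon$ of multiplicity $m$. Because $\mathcal{V}_\varepsilon\subset\mathcal{V}$ and $0\notin\overline{\mathcal{V}}$, every eigenvalue in $s_{\tilde{\mathcal{J}}_{\tilde\Omega}}$ is nonzero, so a second application of Lemma \ref{lemig} transports this finite system back to $\mathcal{J}_{\tilde\Omega}$, with multiplicity preserved, yielding the desired $s_{\mathcal{J}_{\tilde\Omega}}$.

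Finally, for the statement about $\mathcal{B}_\Omega$, I would invoke the affine correspondence from Remark \ref{propeig}(c): the map $\lambda \mapsto 1-\lambda$ is a bijection between the eigenvalues of $\mathcal{B}_\Omega$ and those of $\mathcal{J}_\Omega$ that preserves eigenfunctions (hence multiplicities and eigenprojections). Given a finite system $s_{\mathcal{B}_\Omega}$ of multiplicity $m$ inside a neighborhood $\mathcal{V}$, I would transfer it to a finite system $\{1-\lambda : \lambda\in s_{\mathcal{B}_\Omega}\}$ of $\mathcal{J}_\Omega$ inside $1-\mathcal{V}$, apply the part of the theorem already proven, and then send the resulting system back through $\mu\mapsto 1-\mu$ to obtain $s_{\mathcal{B}_{\tilde\Omega}}\subset\mathcal{V}_\varepsilon$ of multiplicity $m$. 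There is no real obstacle here; the only point requiring a small amount of care is the bookkeeping that ensures the neighborhood $\mathcal{V}_\varepsilon$ produced by Lemma \ref{theofse} can be kept away from $0$ (for $\mathcal{J}$) and from $1$ (for $\mathcal{B}$), which I handle by shrinking $\mathcal{V}$ at the very beginning.
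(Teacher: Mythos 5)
Your proposal is correct and follows precisely the route the paper intends: the paper states this theorem without a separate proof, introducing it as a ``direct consequence of Remark \ref{propeig} and Lemmas \ref{lemig} and \ref{theofse},'' and your chain (transfer $s_{\mathcal{J}_\Omega}$ to $\tilde{\mathcal{J}}_\Omega$ via Lemma \ref{lemig}, invoke the persistence in Lemma \ref{theofse}, transfer back, and use the affine bijection $\lambda\leftrightarrow 1-\lambda$ from Remark \ref{propeig}(c) for $\mathcal{B}_\Omega$) fills in exactly that gap. Your preliminary observation that $0\notin s_{\mathcal{J}_\Omega}$ is a welcome clarification the paper glosses over, though the stated reason is slightly imprecise: when $\mathcal{J}_\Omega$ has infinitely many nonzero eigenvalues its kernel could in principle be finite-dimensional, so the point is rather that $0$ is then non-isolated (accumulation point of $\sigma(\mathcal{J}_\Omega)$) and hence cannot be enclosed by the disjoint disks used in Lemma \ref{theofse}, while for $\tilde{\mathcal{J}}_\Omega$ the kernel always contains $L^2(D\setminus\Omega)$ and is thus automatically infinite-dimensional.
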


We also notice the persistence of a finite system of simple eigenvalues. 
\begin{cor} \label{corsimple}
Let $s_{{\mathcal{J}}_{\Omega}} = \{ \mu_1(\Omega), ..., \mu_{k}(\Omega) \} \subset \sigma({\mathcal{J}}_{\Omega})$ be a finite system of simple eigenvalues with $s_{{\mathcal{J}}_{\Omega}} \subset \mathcal{V}$ for some open set $\mathcal{V} \subset \R$.

Then, for all $\varepsilon > 0$, there exist $\delta>0$ and a neighborhood $\mathcal{V}_\varepsilon \subset \mathcal{V}$ of $s_{{\mathcal{J}}_{\Omega}}$ depending on $s_{{\mathcal{J}}_{\Omega}}$, $\mathcal{V}$ and ${\mathcal{J}}_{\Omega}$ such that, if $\tilde \Omega \subset D \subset \R^N$ satisfies \eqref{eqdelta}, 
the operator ${\mathcal{J}}_{\tilde \Omega}$ also possesses a finite system of simple eigenvalue $s_{{\mathcal{J}}_{\tilde \Omega}} = \{  \mu_1(\tilde \Omega), ...,  \mu_{k}(\tilde \Omega) \} \subset \mathcal{V}_\varepsilon$.

Respectively, if $s_{{\mathcal{B}}_{\Omega}} = \{ \lambda_1(\Omega), ..., \lambda_{k}(\Omega) \} \subset \mathcal{V}$ 
is a finite system of simple eigenvalues for ${\mathcal{B}}_{\Omega}$, then there exists a finite system of simple eigenvalues $s_{{\mathcal{B}}_{\tilde \Omega}} = \{  \lambda_1(\tilde \Omega), ...,  \lambda_{k}(\tilde \Omega) \} \subset \mathcal{V}_\varepsilon$.
\end{cor}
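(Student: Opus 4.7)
The plan is to reduce this corollary to Theorem \ref{theisy} applied \emph{separately} to each of the $k$ simple eigenvalues, using pairwise disjoint neighbourhoods so that the perturbed eigenvalues cannot collide.

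First I would exploit the distinctness of the $\mu_j(\Omega)$: since each is simple with multiplicity one and the $\mu_j(\Omega)$ are (by convention of a ``finite system of simple eigenvalues'') pairwise distinct, I can choose pairwise disjoint open neighbourhoods $\mathcal{V}_1,\dots,\mathcal{V}_k \subset \mathcal{V}$ with $\mu_j(\Omega)\in\mathcal{V}_j$ and $\mu_i(\Omega)\notin\mathcal{V}_j$ for $i\neq j$. Now for each $j$ the singleton $\{\mu_j(\Omega)\}$ is itself a finite system of eigenvalues of $\mathcal{J}_\Omega$ with multiplicity $m_j=1$, sitting inside $\mathcal{V}_j$.

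Next, given $\varepsilon>0$, I would apply Theorem \ref{theisy} (together with Lemma \ref{theofse}, which is what actually provides the $\delta$) to each such singleton system with tolerance $\varepsilon/k$ and neighbourhood $\mathcal{V}_j$, obtaining $\delta_j>0$ and a sub-neighbourhood $\mathcal{V}_{\varepsilon,j}\subset\mathcal{V}_j$ such that, whenever $\tilde\Omega\subset D$ satisfies $|\Omega\setminus\tilde\Omega|+|\tilde\Omega\setminus\Omega|<\delta_j$, the operator $\mathcal{J}_{\tilde\Omega}$ possesses a finite system of eigenvalues inside $\mathcal{V}_{\varepsilon,j}$ of total multiplicity $m_j=1$. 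A system of total multiplicity one automatically consists of exactly one eigenvalue that is simple, so this gives a simple eigenvalue $\mu_j(\tilde\Omega)\in\mathcal{V}_{\varepsilon,j}$. Setting $\delta=\min_{1\le j\le k}\delta_j$ and $\mathcal{V}_\varepsilon=\bigcup_j \mathcal{V}_{\varepsilon,j}$ handles all $k$ eigenvalues at once, and the disjointness of the $\mathcal{V}_{\varepsilon,j}$ ensures $\mu_1(\tilde\Omega),\dots,\mu_k(\tilde\Omega)$ are pairwise distinct, so together they form a finite system of simple eigenvalues of $\mathcal{J}_{\tilde\Omega}$ contained in $\mathcal{V}_\varepsilon\subset\mathcal{V}$.

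Finally, the statement for $\mathcal{B}_{\tilde\Omega}$ follows immediately by the spectral correspondence $\lambda=1-\mu$ given in Remark \ref{propeig}(c): this is an affine bijection between nonzero eigenvalues of $\mathcal{J}_{\Omega}$ and eigenvalues $\lambda\neq 1$ of $\mathcal{B}_{\Omega}$ that preserves multiplicity, so one just transports the neighbourhoods through the map $\mu\mapsto 1-\mu$ and re-invokes the argument above.

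I do not expect a serious obstacle: all the analytic work (norm continuity of $\tilde{\mathcal{J}}_\Omega$ in the symmetric-difference topology and the Kato-type separation of the spectrum via circles $\Gamma_i$) has already been done in Lemmas \ref{lemJt}--\ref{theofse}. The only point to be careful about is that Theorem \ref{theisy} by itself asserts only \emph{total} multiplicity $m$ of the perturbed system, not simplicity of its members; treating the eigenvalues one at a time with $m_j=1$ is precisely what upgrades ``multiplicity $1$ system'' to ``one simple eigenvalue'' and prevents two originally distinct simple eigenvalues from merging after perturbation.
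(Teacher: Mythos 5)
Your proposal is correct and follows essentially the same route as the paper: apply the finite-system persistence result (Lemma~\ref{theofse}/Theorem~\ref{theisy}) to each singleton $\{\mu_i(\Omega)\}$ in a small neighbourhood isolating it, observe that a perturbed system of total multiplicity one is a single simple eigenvalue, and finish by taking $\delta=\min_i\delta_i$; the $\mathcal{B}_\Omega$ case then follows from the bijection $\lambda=1-\mu$ of Remark~\ref{propeig}. Your extra care in choosing pairwise disjoint $\mathcal{V}_j$ to prevent merging is a worthwhile clarification but is implicit in the paper's argument.
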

\begin{proof}
Let us apply Lemma \ref{theofse} to each single system $\{ \mu_i(\Omega) \} \subset s_{{\mathcal{J}}_{\Omega}}$. Since $\mu_i(\Omega)$ is simple, for each $i=1, 2, ..., k$, there exists $\delta_i>0$ such that $\{ \mu_i(\tilde \Omega) \} \subset \sigma({\mathcal{J}}_{\tilde \Omega})$ is also a simple eigenvalue whenever $\tilde \Omega$ satisfies \eqref{eqdelta} substituting $\delta$ with $\delta_i$. Hence, as $s_{{\mathcal{J}}_{\Omega}}$ is a finite collection, the result follows if we take $\delta = \min \{\delta_1, ..., \delta_k \}$ setting $s_{{\mathcal{J}}_{\tilde \Omega}}$ in a natural form.   
\end{proof}

Now, we are ready to obtain the convergence of single eigenvalues given by a sequence of bounded open sets.
\begin{lemma} \label{cormu}
Let $\Omega_n \subset \R^N$ be a sequence of bounded open sets with 
$$
|\Omega \setminus \Omega_n| + |\Omega_n \setminus \Omega| \to 0, \quad \textrm{ as } n \to \infty
$$
for some bounded open set $\Omega \subset \R^N$. Then, if $\tilde \mu(\Omega)$ is an eigenvalue for $\tilde{\mathcal{J}}_{\Omega}$, there exists a family of eigenvalues $\tilde \mu(\Omega_n) \in \sigma(\tilde{\mathcal{J}}_{\Omega_n})$ such that  
$$
\tilde \mu(\Omega_n) \to \tilde \mu(\Omega), \quad \textrm{ as } n \to \infty.
$$
\end{lemma}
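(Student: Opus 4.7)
The plan is to reduce the claim to Lemma \ref{theofse} by distinguishing whether $\tilde{\mu}(\Omega)$ is zero or not, and then using an $\varepsilon = 1/k$ diagonal selection to produce the desired sequence.

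First I would dispose of the trivial case $\tilde{\mu}(\Omega)=0$. By Remark \ref{EiPro2} the value $0$ belongs to $\sigma(\tilde{\mathcal{J}}_{\tilde\Omega})$ for every bounded open set $\tilde\Omega\subset D$ (either as an accumulation point of eigenvalues or because the kernel is nontrivial in infinite dimensions), so we may simply set $\tilde{\mu}(\Omega_n)=0$ for every $n$ and the conclusion is immediate.

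Now assume $\tilde{\mu}(\Omega)\neq 0$. By Remark \ref{EiPro} (applied to the extended operator as discussed just above Lemma \ref{lemJt}), $\tilde{\mu}(\Omega)$ is then an isolated eigenvalue of finite multiplicity $m$. The single point $s_{\tilde{\mathcal{J}}_\Omega}=\{\tilde{\mu}(\Omega)\}$ therefore constitutes a finite system of eigenvalues with multiplicity $m$ in the sense required by Lemma \ref{theofse}. For any $\varepsilon>0$ small enough that the interval $\mathcal{V}=(\tilde{\mu}(\Omega)-\varepsilon,\tilde{\mu}(\Omega)+\varepsilon)$ is a neighborhood of $\tilde{\mu}(\Omega)$ containing no other points of $\sigma(\tilde{\mathcal{J}}_\Omega)$, Lemma \ref{theofse} yields a $\delta=\delta(\varepsilon)>0$ such that whenever a bounded open set $\tilde\Omega\subset D$ satisfies $|\Omega\setminus\tilde\Omega|+|\tilde\Omega\setminus\Omega|<\delta$, the operator $\tilde{\mathcal{J}}_{\tilde\Omega}$ has a finite system of eigenvalues of total multiplicity $m$ entirely contained in $\mathcal{V}$.

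I would then carry out the following diagonal construction. Apply the preceding paragraph with $\varepsilon_k=1/k$, obtaining $\delta_k>0$. Since $|\Omega\setminus\Omega_n|+|\Omega_n\setminus\Omega|\to 0$, there is an index $n_k$ (which we may take strictly increasing in $k$) so that for every $n\ge n_k$ the symmetric-difference hypothesis holds with threshold $\delta_k$. For such $n$, pick any eigenvalue of $\tilde{\mathcal{J}}_{\Omega_n}$ lying in the finite system produced by Lemma \ref{theofse}, and call it $\tilde{\mu}(\Omega_n)$; by construction $|\tilde{\mu}(\Omega_n)-\tilde{\mu}(\Omega)|<1/k$ for all $n\ge n_k$, so $\tilde{\mu}(\Omega_n)\to\tilde{\mu}(\Omega)$ as $n\to\infty$ (for $n<n_1$ we may choose $\tilde{\mu}(\Omega_n)\in\sigma(\tilde{\mathcal{J}}_{\Omega_n})$ arbitrarily).

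The main obstacle is really just bookkeeping: one must verify that the eigenvalue selected inside the perturbed finite system is genuinely an eigenvalue of $\tilde{\mathcal{J}}_{\Omega_n}$ (guaranteed by Lemma \ref{theofse}, since finite systems there are described through their spectral projections onto finite-dimensional invariant subspaces) and one must handle the $\tilde{\mu}(\Omega)=0$ case separately because Lemma \ref{theofse} concerns finite systems, which implicitly means isolated eigenvalues of finite multiplicity. Both issues are addressed above; no new analytic ingredient beyond Lemma \ref{theofse} is required.
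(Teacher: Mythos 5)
Your proof is correct and follows essentially the same approach as the paper, which simply invokes Lemma \ref{theofse} (together with Lemma \ref{lemJt}, which you use implicitly through Lemma \ref{theofse}) for a small neighborhood of $\tilde\mu(\Omega)$. You have spelled out the diagonal selection $\varepsilon_k = 1/k$ explicitly and added the observation that the case $\tilde\mu(\Omega)=0$ is trivial because $0$ is always an eigenvalue of $\tilde{\mathcal{J}}_{\Omega_n}$ on $L^2(D)$ (functions supported in $D\setminus\Omega_n$ lie in its kernel); both are fine and match the intent of the paper's one-line argument.
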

\begin{proof}
We just need to fix a small neighborhood for the single eigenvalue $\tilde \mu(\Omega)$ applying Lemma \ref{theofse} and Lemma \ref{lemJt}. 
\end{proof}

Next, let us proof Theorem \ref{singleeig}.
\begin{proof}[Proof of Theorem \ref{singleeig}]
It follows from Remarks \ref{EiPro} and \ref{EiPro2} that the eigenvalues of ${\mathcal{J}}_{\Omega_n}$ with finite multiplicity are all nonzero. 
Hence, the convergence of $\mu(\Omega_n)$ follows from Lemma \ref{lemig} and Lemma \ref{cormu}.  
Finally, the convergence of $\lambda(\Omega_n)$ is guaranteed by the  expression $\lambda(\Omega_n) = 1 - \mu_k(\Omega_n)$ obtained in Remark \ref{propeig}.
\end{proof}

Finally, let us consider two families of open sets discussing continuity of eigenvalues for the integral operators ${\mathcal{J}}_{\Omega}$ and ${\mathcal{B}}_{\Omega}$. First, we look at a family of open sets with rough boundary. Next, we analyse a periodically perforated domain. Below, we illustrate each family in Figure 1 and Figure 2 respectively. 

\begin{example}[Open sets with rough boundary] { \rm
Let us consider the following family of domains
$$
\Omega_n = \left\{ (x,y) \in \R^2 \, : \, x \in (0,1) \textrm{ and } 0 < y < 1 + \frac{\sin (2 \pi n x )}{n} \right\}.
$$
The family $\Omega_n$ can be seen as a perturbation of the unit square $\Omega = (0,1)^2$ and has been studied by  many authors; see e.g.,  \cite{arrieta_simone, Barbosa, CFP} and references therein. 

It is not difficult to see that 
$$
|\Omega \setminus \Omega_n| + |\Omega_n \setminus \Omega| = \frac{2 n \int_0^{1/2n} \sin(2 \pi n x) \, dx}{n} = \frac{2}{\pi n} \to 0 \quad \textrm{ as } n \to \infty.
$$
Consequently, we may apply Corollary \ref{singleeig}, Lemma \ref{theofse}, and Corollary \ref{corsimple} to this family of open sets evaluating the behavior of their eigenvalues.  }

\begin{figure}[htp] 
\centering \scalebox{0.4}{\includegraphics{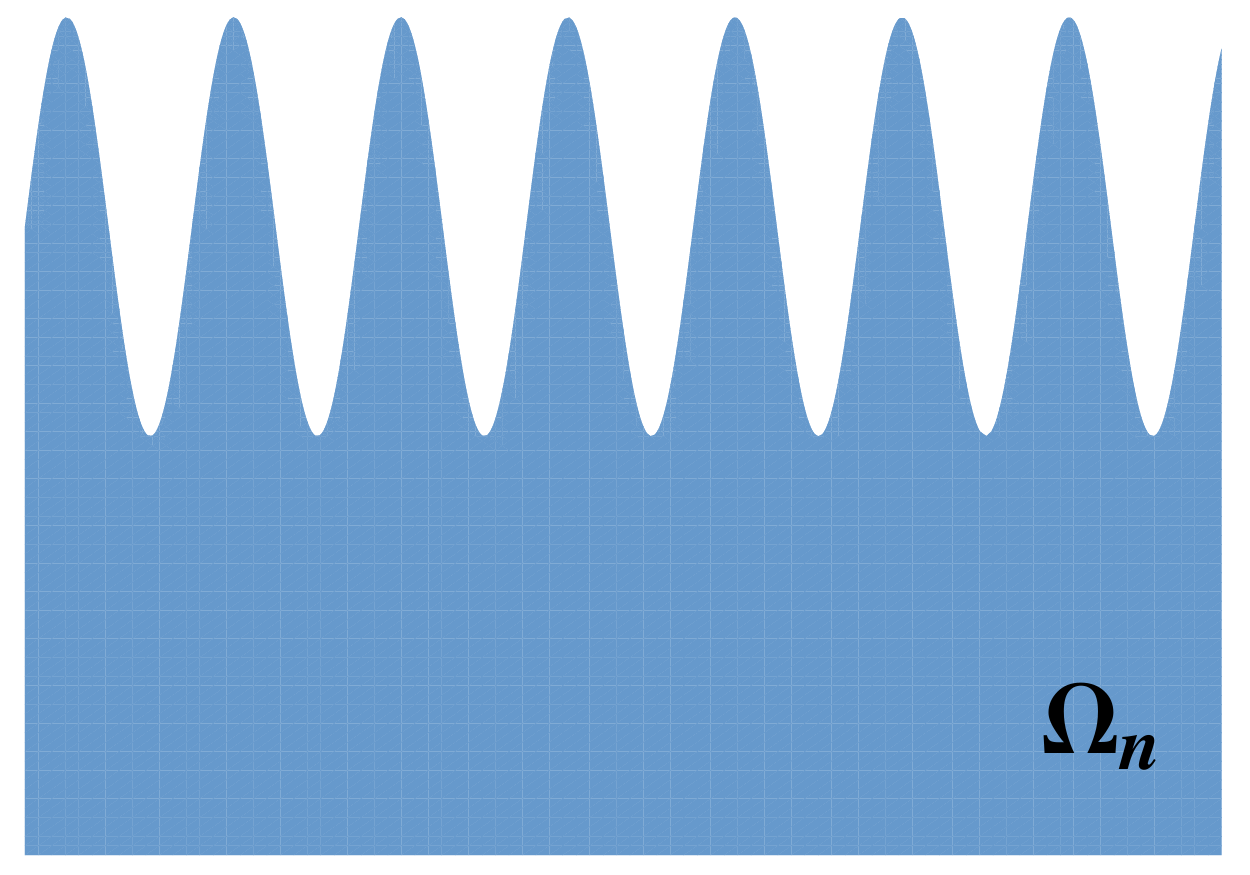}}
\caption{A family of open sets with rough boundary.}
\label{fig1} 
\end{figure}

\end{example}

\begin{example}[Perforated domains] { \rm 
Let $Q \subset \R^N$ be the following cell
$$
Q = (0,l_1) \times (0,l_2) \times ... \times (0,l_N).
$$ 
We perforate $\Omega \subset \R^N$ removing from it a set $A^\epsilon$ of periodically distributed holes set as follows:
Take any open set $A \subset Q$ such that $Q \setminus A$ is a measurable set with $|Q \setminus A| \neq 0$.
Denote by $\tau_\epsilon(A)$ all translated images of $\epsilon \bar A$ of the form 
$\epsilon(kl + A)$ for $k \in \mathbb{Z}^N$ and $kl=(k_1 l_1, ..., k_N l_N)$. 
Now define $A^\epsilon = \Omega \cap \tau_\epsilon(A)$
introducing our perforated domain as 
\begin{equation} \label{PerfDom}
\Omega^\epsilon = \Omega \setminus A^\epsilon, \quad \epsilon >0.
\end{equation}

Notice that, if the measure of the set $A$ is nonzero, then $|\Omega \setminus \Omega^\epsilon| + |\Omega^\epsilon \setminus \Omega|$ does not converge to zero as $\epsilon \to 0$. Thus, Theorem \ref{singleeig} and Lemma \ref{theofse}, as well Corollary \ref{corsimple}, can not be applied to this family of open sets.

Indeed, it follows from \cite[Lemma 3.1 and Section 4.1]{PR} that the first eigenvalue $\lambda_1(\Omega^\epsilon)$ of the nonlocal Dirichlet operator $\mathcal{B}_{\Omega^\epsilon}$ converges to a value $\beta_1$ as $\epsilon \to 0$ which satisfies $\beta_1 \in (0,1)$, and  
\begin{equation} \label{eq780}
- \frac{\beta_1}{\mathcal{X}} \phi^*(x) =  \mathcal{B}_{\Omega} \phi^*(x) - \frac{(1-\mathcal{X})}{\mathcal{X}} \phi^*(x), \quad x \in \Omega,
\end{equation}
for a strictly positive function $\phi^* \in L^2(\Omega)$, with $\phi^*(x) \equiv 0$ in $\R^N \setminus \Omega$, and a positive constant $\mathcal{X}$  
$$\mathcal{X} = \frac{|Q \setminus A|}{|Q|}$$
which is gotten by the limit of the characteristic function of the open sets $\Omega^\epsilon$ as $\epsilon \to 0$.

We have:
\begin{cor}
$\beta_1$ is the first eigenvalue of $\mathcal{B}_{\Omega}$, if and only if, $|A|=0$, that is, when $\Omega$ is weakly perforated.
\end{cor}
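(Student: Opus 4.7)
The argument splits into two implications and, in both directions, leverages the limit identity \eqref{eq780} together with the simplicity/Perron--Frobenius statement for the first eigenvalue of $\mathcal{B}_\Omega$ recorded in Remark~\ref{propeig}(d).

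For the direction $|A|=0 \Rightarrow \beta_1 = \lambda_1(\Omega)$, I would observe that if $A$ is Lebesgue-null, then every rescaled translate $\epsilon(kl+\bar{A})$ is also null, hence $|A^\epsilon|=0$ and therefore $|\Omega\setminus\Omega^\epsilon|+|\Omega^\epsilon\setminus\Omega|=0$ for every $\epsilon>0$. The operators $\mathcal{B}_{\Omega^\epsilon}$ and $\mathcal{B}_\Omega$ then coincide on $L^2(\Omega)$, giving $\lambda_1(\Omega^\epsilon)=\lambda_1(\Omega)$ for each $\epsilon$, and passing to the limit yields $\beta_1=\lambda_1(\Omega)$. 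Alternatively this is a direct application of Theorem~\ref{singleeig}.

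For the converse $\beta_1=\lambda_1(\Omega)\Rightarrow |A|=0$, I would rearrange \eqref{eq780} so that $\phi^*$ appears as an eigenfunction of the original operator:
\begin{equation*}
\mathcal{B}_\Omega \phi^*(x) = \Lambda(\mathcal{X},\beta_1)\,\phi^*(x), \qquad x\in\Omega,
\end{equation*}
with an explicit affine expression $\Lambda(\mathcal{X},\beta_1)$ obtained by bringing the term $\tfrac{1-\mathcal{X}}{\mathcal{X}}\phi^*$ to the left-hand side. Because $\phi^*$ is strictly positive in $\Omega$ and supported there, the Perron--Frobenius statement of Remark~\ref{propeig}(d) combined with the simplicity of $\lambda_1(\Omega)$ forces $\phi^*$ to be a positive multiple of the first eigenfunction $u_1$ of $\mathcal{B}_\Omega$ and $\Lambda(\mathcal{X},\beta_1)$ to equal $\lambda_1(\Omega)$. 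Imposing the hypothesis $\beta_1=\lambda_1(\Omega)$ then collapses this identity to a scalar equation in the single unknown $\mathcal{X}$; using $\lambda_1(\Omega)\in(0,1)$ from \eqref{300}, the only admissible value compatible with the set-up is $\mathcal{X}=1$, which by the definition $\mathcal{X}=|Q\setminus A|/|Q|$ is equivalent to $|A|=0$.

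The main obstacle is the identification of $\phi^*$ as a positive scalar multiple of $u_1$: this is the only place where the Perron--Frobenius property plays a decisive role, since without it the constant $\Lambda(\mathcal{X},\beta_1)$ could a priori equal any other element of $\sigma(\mathcal{B}_\Omega)$ and the ensuing algebraic reduction would fail. Once that identification is made, the rest reduces to an elementary scalar manipulation in which the strict positivity of $\lambda_1(\Omega)$ from \eqref{300} is used to rule out every value of $\mathcal{X}$ other than $1$.
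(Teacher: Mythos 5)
For the direction $|A|=0 \Rightarrow \beta_1 = \lambda_1(\Omega)$ you argue exactly as the paper does (via Theorem~\ref{singleeig}), and your preliminary observation that $\Omega^\epsilon$ and $\Omega$ differ only by a null set, so that $\mathcal{B}_{\Omega^\epsilon}$ and $\mathcal{B}_\Omega$ coincide outright, is in fact cleaner: it makes the appeal to continuity superfluous.

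For the converse you take a genuinely different route. The paper does \emph{not} identify $\phi^*$ as an eigenfunction; it only tests \eqref{eq780} against $\phi^*$, converts the resulting quadratic form into $\tfrac{1}{2}\iint J(x-y)(\phi^*(y)-\phi^*(x))^2$ via Remark~\ref{propeig}(e), and then uses the one-sided variational bound $\tfrac{1}{2}\iint J(\phi^*(y)-\phi^*(x))^2 \geq \lambda_1(\Omega)\|\phi^*\|^2 = \beta_1\|\phi^*\|^2$ from \eqref{eq350}. This gives the inequality $\beta_1(1-\mathcal{X}) \geq 1-\mathcal{X}$, and $\mathcal{X}=1$ follows from $\beta_1\in(0,1)$ and $\mathcal{X}\leq 1$. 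You instead rearrange \eqref{eq780} into an eigenvalue equation $\mathcal{B}_\Omega\phi^* = \Lambda(\mathcal{X},\beta_1)\,\phi^*$ and invoke the simplicity of $\lambda_1(\Omega)$ together with the Perron--Frobenius property (only the first eigenfunction is of one sign) to pin down $\Lambda(\mathcal{X},\beta_1)=\lambda_1(\Omega)$. Your route thus replaces the paper's inequality by an equality, at the cost of the Perron--Frobenius argument, which the paper's variational route avoids entirely. Both close with the same elementary scalar manipulation and the constraint $\beta_1\in(0,1)$, so the two proofs are equivalent in difficulty but distinct in mechanism.

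One caution: you do not actually carry out the algebra that produces $\Lambda(\mathcal{X},\beta_1)$ or the reduction to a scalar equation, and this step is sensitive to the sign conventions in \eqref{eq780}. Reading \eqref{eq780} at face value with $\mathcal{B}_\Omega$ as in \eqref{Bdef} gives $\Lambda=\frac{1-\mathcal{X}-\beta_1}{\mathcal{X}}$ and hence $\beta_1(1+\mathcal{X})=1-\mathcal{X}$, which does \emph{not} force $\mathcal{X}=1$; the paper's own displayed test identity corresponds to $\Lambda=\frac{\beta_1-(1-\mathcal{X})}{\mathcal{X}}$, which does reduce to $(1-\beta_1)(1-\mathcal{X})=0$. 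You should make the sign bookkeeping explicit so that your scalar equation really collapses to $(1-\beta_1)(1-\mathcal{X})=0$.
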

\begin{proof}
If $\beta_1$ is the first eigenvalue of $\mathcal{B}_{\Omega}$ and satisfies \eqref{eq780}, taking, $\phi^*$ as a test function in equation \eqref{eq780}, we get that 
$$
- \frac{(1-\mathcal{X}-\beta_1)}{\mathcal{X}} \| \phi^*\|_{L^2(\Omega)}^2 = \frac{1}{2} \int_{\R^N} \int_{\R^N} J(x-y) (\phi^*(y) - \phi^*(x))^2 dy dx  \geq \beta_1 \| \phi^*\|_{L^2(\Omega)}^2
$$
and then, $\beta_1 (1-\mathcal{X}) \geq (1-\mathcal{X})$. Since $\beta_1 \in (0,1)$, we obtain $\mathcal{X} =1$, which implies $|A|=0$.

Reciprocally, if $|A|=0$, then $|\Omega \setminus \Omega^\epsilon| + |\Omega^\epsilon \setminus \Omega|=0$ for all $\epsilon>0$, and then, we can apply Theorem \ref{singleeig} obtaining $\lambda_1(\Omega^\epsilon) \to \lambda_1(\Omega) = \beta_1$ as $\epsilon \to 0$, completing the proof.
\end{proof}
}

\begin{figure}[htp] 
\centering \scalebox{0.4}{\includegraphics{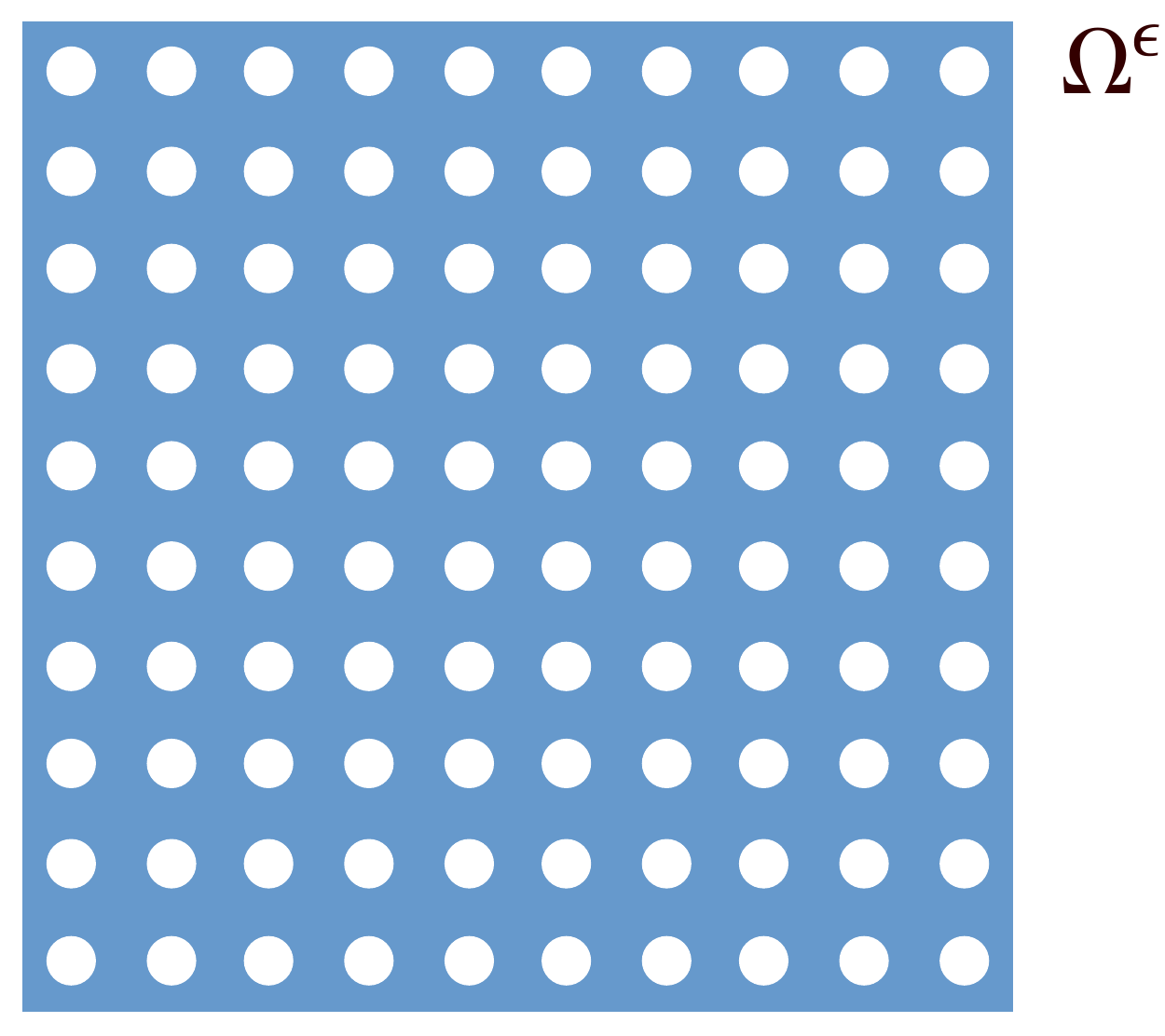}}
\caption{A periodic perforated domain $\Omega^\epsilon= (0,1)^2  \setminus A^\epsilon$.}
\label{fig1} 
\end{figure}

\end{example}

\section{Domain derivative of simple eigenvalues} \label{deriS}

In this section, we perturb simple eigenvalues of operators $\mathcal{J}_{\Omega}$ and $\mathcal{B}_{\Omega}$ getting derivatives with respect to the domain $\Omega$. We use the approach introduced in \cite{Henry} perturbing a fixed domain $\Omega$ by diffeomorphisms. As a consequence, we extend the expression obtained to the domain derivative for the first eigenvalue in \cite{GR} for any simple one in the spectral set of $\mathcal{J}_{\Omega}$ and $\mathcal{B}_{\Omega}$. 

Let $\Omega \subset \R^N$ be an open bounded set $\mathcal{C}^1$-regular.
If $h:\Omega \mapsto \R^N$ is a $\mathcal{C}^1$ imbedding, that is, a diffeomorphism to its image, we set the composition map $h^*$ (sometimes called pull-back) by 
$$
h^* v(x) = (v \circ h)(x), \quad x \in \Omega,
$$
when $v$ is any given function defined on $h(\Omega)$. It is not difficult to see $h^*: L^2(h(\Omega)) \mapsto L^2(\Omega)$ is an isomorphism with inverse 
$(h^*)^{-1} = (h^{-1})^*$. 

For such imbedding $h$ and a bounded region $\Omega$, one has 
\begin{equation} \label{Jh}
\left( \mathcal{J}_{h(\Omega)} v \right)(y) = \int_{h(\Omega)} J(y - w) v(w) dw, \quad \forall y \in h(\Omega),
\end{equation}
setting $\mathcal{J}_{h(\Omega)}: L^2(h(\Omega)) \mapsto L^2(h(\Omega))$.
On the other hand, we can use the pull-back operator $h^*$ to consider $h^* \mathcal{J}_{h(\Omega)} {h^*}^{-1}: L^2(\Omega) \mapsto L^2(\Omega)$ given by
\begin{equation} \label{hJh}
h^* \mathcal{J}_{h(\Omega)} {h^*}^{-1} u(x) = \int_{h(\Omega)} J(h(x) - w) (u \circ h^{-1})(w) dw, \quad \forall x \in \Omega.
\end{equation}

As we have already mentioned, expressions \eqref{Jh} and \eqref{hJh} are the customary way to describe motion or deformation of regions. 
\eqref{Jh} is called the Lagrangian description, and \eqref{hJh} the Eulerian one. 
The former is written in a fixed coordinate while the Lagrangian does not. 
It is easy to see 
\begin{equation} \label{eq820}
h^* \mathcal{J}_{h(\Omega)} {h^*}^{-1} u(x) = \int_{h(\Omega)} J(y - w) v(w) dw = \left( \mathcal{J}_{h(\Omega)} v \right)(y)
\end{equation}
if we take $y=h(x)$ and $v(y) = (u \circ h^{-1})(y) = {h^*}^{-1} u(y)$ for $y \in h(\Omega)$.

Notice $h^* \mathcal{J}_{h(\Omega)} {h^*}^{-1}$ is a compact operator since $h^*$ and ${h^*}^{-1}$ 
are isomorphisms and $\mathcal{J}_{h(\Omega)}$ is compact.
On the other side, if we maintain the Lebesgue measure, it is not a self-adjoint operator.

In fact, if we change  the $L^2(\Omega)$ measure using the determinant of the Jacobian matrix $Dh$ of $h$, 
we do obtain a self-adjoint operator. 
As $J$ is even,  by a change of variables, we have
\begin{eqnarray*}
\int_\Omega \varphi(x) h^* \mathcal{J}_{h(\Omega)} {h^*}^{-1} u(x) |{\rm det}(Dh(x))| dx =  
\int_{h(\Omega)} (\varphi \circ h^{-1})(y) \int_{h(\Omega)} J(y-w) (u \circ h^{-1})(w) dw dy   \\
 =  \int_\Omega \left(  \int_{h(\Omega)} J(h(z) - y) (\varphi \circ h^{-1})(y) dy \right) u(z) |{\rm det}(Dh(z))| dz \qquad \qquad  \\
 = \int_\Omega h^* \mathcal{J}_{h(\Omega)} {h^*}^{-1} \varphi(z) \; u(z) \; |{\rm det}(Dh(z))| dz. \qquad \qquad \qquad \qquad 
\end{eqnarray*}
Consequently, if we change the measure of $L^2(\Omega)$ taking  
$$
\hat L^2(\Omega) = \left\{  u:\Omega \mapsto \R \; : \; \int_\Omega u^2(x) |{\rm det}(Dh(x))| dx < \infty  \right\} 
$$
we have that $h^* \mathcal{J}_{h(\Omega)} {h^*}^{-1}: \hat L^2(\Omega) \mapsto \hat L^2(\Omega)$ is a compact self-adjoint operator in $\hat L^2(\Omega)$.
As $h$ is an imbedding, there exists $c>0$ such that $|{\rm det}(Dh)| \geq c>0$ in $\Omega$, and then, $\hat L^2(\Omega)$ is well defined. 
Thus, we can conclude that $\sigma \left( h^* \mathcal{J}_{h(\Omega)} {h^*}^{-1} \right) \subset \R$ for any imbedding $h:\Omega \mapsto \R$.

We have the following result:
\begin{prop} \label{peigenh}
Let $h:\Omega \mapsto \R$ be an imbedding. Then, $\mu \in \R$ is an eigenvalue of $h^* \mathcal{J}_{h(\Omega)} {h^*}^{-1}$, if and only if, is an eigenvalue for $\mathcal{J}_{h(\Omega)}$.
\end{prop}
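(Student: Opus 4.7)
The plan is to observe that $h^* \mathcal{J}_{h(\Omega)} {h^*}^{-1}$ is similar (i.e.\ conjugate) to $\mathcal{J}_{h(\Omega)}$ via the isomorphism $h^*: L^2(h(\Omega)) \to L^2(\Omega)$, and similar operators have the same eigenvalues. No compactness, self-adjointness, or smoothness of $h$ beyond being a diffeomorphism to its image is needed for this statement; only the fact that $h^*$ is a bijection between the two $L^2$ spaces with inverse $(h^{-1})^*$, which is recorded earlier in the excerpt.

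Concretely, I would split the argument into two implications. Suppose first that $\mu \in \R$ is an eigenvalue of $\mathcal{J}_{h(\Omega)}$ with eigenfunction $v \in L^2(h(\Omega))$, $v \neq 0$. Define $u = h^* v = v\circ h \in L^2(\Omega)$. Since $h^*$ is an isomorphism, $u \neq 0$, and
\begin{equation}
h^* \mathcal{J}_{h(\Omega)} {h^*}^{-1} u \;=\; h^* \mathcal{J}_{h(\Omega)} v \;=\; h^*(\mu v) \;=\; \mu\, h^* v \;=\; \mu u,
\end{equation}
so $\mu$ is an eigenvalue of $h^* \mathcal{J}_{h(\Omega)} {h^*}^{-1}$. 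Conversely, if $h^* \mathcal{J}_{h(\Omega)} {h^*}^{-1} u = \mu u$ with $u \in L^2(\Omega)$ nonzero, set $v = {h^*}^{-1}u \in L^2(h(\Omega))$; again $v \neq 0$ and applying ${h^*}^{-1}$ to both sides of the eigenvalue relation gives $\mathcal{J}_{h(\Omega)} v = \mu v$.

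There is no real obstacle here: the statement is a direct consequence of the definition \eqref{hJh} together with the fact that $(h^*)^{-1} = (h^{-1})^*$. The only mild care is to note that since $h$ is a $\mathcal{C}^1$-diffeomorphism onto its image, $h^*$ really is a bijection between the two $L^2$ spaces (this is where one would invoke a change-of-variables argument to verify that $h^* v \in L^2(\Omega)$ whenever $v \in L^2(h(\Omega))$, using that $|\det Dh|$ is bounded away from $0$ and from $\infty$ on $\overline{\Omega}$). Once that is in place, the equivalence of eigenvalues follows immediately from the similarity $h^* \mathcal{J}_{h(\Omega)} {h^*}^{-1}$, and in fact the corresponding eigenspaces are mapped isomorphically onto each other by $h^*$, so multiplicities are preserved as well.
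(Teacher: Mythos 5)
Your proof is correct and takes essentially the same approach as the paper: both arguments use that $h^*: L^2(h(\Omega)) \to L^2(\Omega)$ is an isomorphism with $(h^*)^{-1} = (h^{-1})^*$, so that $h^* \mathcal{J}_{h(\Omega)} (h^*)^{-1}$ is conjugate to $\mathcal{J}_{h(\Omega)}$ and hence has the same eigenvalues, with eigenfunctions mapped to each other by $h^*$. The paper states this more compactly by invoking the identity recorded in equation \eqref{eq820}, while you spell out the two implications and the boundedness of $|\det Dh|$ justifying that $h^*$ is an $L^2$-isomorphism; the substance is identical.
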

\begin{proof}
Indeed, it follows from \eqref{eq820} that
$$
h^* \mathcal{J}_{h(\Omega)} {h^*}^{-1} u(x) = \mu u(x), \quad x \in \Omega, 
$$
if and only if, 
$$
\mathcal{J}_{h(\Omega)} v(y) = \mu v(y), \quad y \in h(\Omega),
$$
for $v(y) = (u \circ h^{-1})(y)$ with $y \in h(\Omega)$. Also, since ${h^*}^{-1}: L^2(\Omega) \mapsto L^2(h(\Omega))$ is an isomorphism, $u \neq 0$, if and only if, $v \neq 0$.
\end{proof}

Now, let us study differentiability properties of simple eigenvalues $\mu_{h(\Omega)}$ of $\mathcal{J}_{h(\Omega)}$ with respect to $h$. 
For this, we denote by $\rm{Diff}^1(\Omega) \subset \mathcal{C}^1(\Omega, \R^N)$ 
the set of $\mathcal{C}^1$-functions $h:\Omega \mapsto \R$ which are imbeddings considering the map 
\begin{eqnarray*}
F: \rm{Diff}^1(\Omega) \times \R \times L^2(\Omega) & \mapsto & L^2(\Omega) \times \R \\
(h,\mu, u) & \mapsto & \left(  \left(h^* \mathcal{J}_{h(\Omega)} {h^*}^{-1}  - \mu \right) u, \int_\Omega u^2(x)  |{\rm det}(Dh(x))| dx  \right).
\end{eqnarray*} 

It is not difficult to see that $\rm{Diff}^1(\Omega)$ is an open set of $\mathcal{C}^1(\Omega, \R^N)$ which denotes the space of $\mathcal{C}^1$-functions from $\Omega$ into $\R^N$ whose derivatives extend continuously to the closure $\bar \Omega$ with the usual supremum norm. 
Hence, $F$ can be seen as a map defined between Banach spaces.

Notice, if $\mu_0 \in \R$ is an eigenvalue for $\mathcal{J}_{\Omega}$ for some $u_0 \in L^2(\Omega)$ with $\int_\Omega u_0^2(x) dx = 1$, then $F(i_\Omega, \mu_0, u_0) = (0,1)$ where $i_\Omega \in \rm{Diff}^1(\Omega)$ denotes the inclusion map of $\Omega$ into $\R^N$.
On the other side, whenever $F(h, \mu, u) = (0,1)$, we have from Proposition \ref{peigenh} that 
$$
\mathcal{J}_{h(\Omega)} v(y) = \mu v(y), \quad y \in h(\Omega), 
\quad \textrm{ with } \quad 
\int_{h(\Omega)} v^2(y) dy =1
$$
where $v(y) = (u \circ h^{-1})(y)$ for $y \in h(\Omega)$.
In this way, we can use the map $F$ to deal with eigenvalues and eigenfunctions of $\mathcal{J}_{h(\Omega)}$ and $h^* \mathcal{J}_{h(\Omega)} {h^*}^{-1}$ perturbing the eigenvalue problem to the fixed domain $\Omega$ by diffeomorphisms $h$.

\begin{lemma} \label{deri}
Let $\mu_0$ be a simple eigenvalue for $\mathcal{J}_{\Omega}$ with corresponding normalized eigenfuction $u_0$ and 
$J \in \mathcal{C}^1(\R^N, \R)$ satisfying $(\rm{H})$. 
Then, there exists a neighbourhood $\mathcal{V}$ of inclusion $i_{\Omega} \in \mathop{\rm Diff}^1(\Omega)$, and $\mathcal{C}^1$-functions 
$u_h$ and $\mu_h$ from $\mathcal{V}$ into $L^2(\Omega)$ and $\mathbb{R}$ respectively satisfying 
$$
h^* \mathcal{J}_{h(\Omega)} {h^*}^{-1} u_h(x) = \mu_h u(x), \quad x \in \Omega, 
$$
with $u_h \in \mathcal{C}^1(\Omega)$ for all $h \in \mathcal{V}$.  

Moreover, $\mu_h$ is a simple eigenvalue with $(\mu_{i_{\Omega}}, u_{i_\Omega} ) = ( \mu_0, u_0)$ and domain derivative 
\begin{equation} \label{dode}
\frac{\partial \mu}{\partial h}(i_\Omega) \cdot V = \mu_0 \int_{\partial \Omega} u_0^2 \; V \cdot N_\Omega dS \quad 
\forall V \in \mathcal{C}^1(\Omega, \R^N).
\end{equation}
\end{lemma}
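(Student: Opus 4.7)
The plan is to apply the Implicit Function Theorem to the map $F$ defined just before the statement at the point $(i_\Omega, \mu_0, u_0)$, where $F(i_\Omega, \mu_0, u_0) = (0,1)$. First I would check that $F$ is of class $\mathcal{C}^1$ on a neighborhood of $(i_\Omega, \mu_0, u_0)$ in $\mathop{\rm Diff}^1(\Omega) \times \R \times L^2(\Omega)$: the hypothesis $J \in \mathcal{C}^1$ lets one differentiate under the integral in \eqref{hJh}, $h \mapsto |\det Dh|$ is smooth as a map into $\mathcal{C}^0(\bar\Omega)$, and the remaining arguments enter linearly. A direct computation yields the partial derivative
\[
D_{(\mu, u)} F(i_\Omega, \mu_0, u_0)(\nu, \phi) = \bigl( (\mathcal{J}_\Omega - \mu_0)\phi - \nu u_0, \; 2\int_\Omega u_0 \phi \, dx \bigr).
\]
Given $(g, c) \in L^2(\Omega) \times \R$, the Fredholm alternative applied to the self-adjoint compact operator $\mathcal{J}_\Omega - \mu_0 I$, whose kernel is spanned by $u_0$ by simplicity of $\mu_0$, forces $\nu = -\int_\Omega g u_0\, dx$ (so that $g + \nu u_0 \perp u_0$); then $\phi$ is determined modulo $u_0$, and the second component of the target fixes the remaining scalar, so the partial derivative is a Banach-space isomorphism.

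The IFT then produces $\mathcal{C}^1$ branches $h \mapsto (\mu_h, u_h)$ with $(\mu_{i_\Omega}, u_{i_\Omega}) = (\mu_0, u_0)$ satisfying $F(h, \mu_h, u_h) = (0,1)$. Simplicity of $\mu_h$ for $h$ close to $i_\Omega$ is inherited from Corollary \ref{corsimple} combined with Proposition \ref{peigenh}, and the regularity $u_h \in \mathcal{C}^1(\Omega)$ is automatic from the eigenvalue equation
\[
\mu_h u_h(x) = \int_\Omega J(h(x) - h(z))\, u_h(z)\, |\det Dh(z)|\, dz,
\]
whose right-hand side is visibly a $\mathcal{C}^1$ function of $x$ when $h$ and $J$ are $\mathcal{C}^1$. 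The same identity, read as a formula for the right-hand side on all of $\R^N$, provides a $\mathcal{C}^1$ extension of $u_h$ across $\partial\Omega$; this is the regularity which justifies the boundary integration in the next step.

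The heart of the argument is the derivative formula. Taking $h_\varepsilon = i_\Omega + \varepsilon V$ and writing $\dot \mu = \partial_h \mu(i_\Omega)\cdot V$, $\dot u = \partial_h u(i_\Omega)\cdot V$, differentiation of the above eigenvalue equation at $\varepsilon = 0$ gives, for $x \in \Omega$,
\[
\dot \mu\, u_0(x) + \mu_0 \dot u(x) = \int_\Omega \nabla J(x-z)\cdot(V(x)-V(z))\, u_0(z)\, dz + \mathcal{J}_\Omega \dot u(x) + \int_\Omega J(x-z)\, u_0(z)\, \mathop{\rm div} V(z)\, dz.
\]
Applying the divergence theorem to the last integral, and using the identity $\int_\Omega \nabla J(x-z) u_0(z)\, dz = \nabla_x(\mathcal{J}_\Omega u_0)(x) = \mu_0 \nabla u_0(x)$ to cancel opposing volume gradient terms, the right-hand side collapses to
\[
\mu_0\, V(x)\cdot\nabla u_0(x) - \int_\Omega J(x-z)\, V(z)\cdot\nabla u_0(z)\, dz + \int_{\partial\Omega} J(x-z)\, u_0(z)\, V(z)\cdot N_\Omega\, dS_z + \mathcal{J}_\Omega \dot u(x).
\]
Pairing with $u_0$ in $L^2(\Omega)$, self-adjointness of $\mathcal{J}_\Omega$ cancels $\int_\Omega u_0\, \mathcal{J}_\Omega \dot u$ against $\mu_0 \int_\Omega u_0 \dot u$ on the left, while Fubini together with $\int_\Omega J(x-z) u_0(x)\, dx = \mu_0 u_0(z)$ (valid on $\bar\Omega$ by the $\mathcal{C}^1$ extension of $u_0$ and evenness of $J$) makes the two remaining bulk integrals cancel. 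What survives is exactly
\[
\dot \mu = \mu_0 \int_{\partial\Omega} u_0^2\, V\cdot N_\Omega\, dS,
\]
which is \eqref{dode}. The main obstacle is this last step: the divergence-theorem bookkeeping and the interchanges of integration have to be executed carefully, and they rest on $u_0$ being $\mathcal{C}^1$ up to (and across) the boundary, a fact supplied for free by $J \in \mathcal{C}^1$ and the $\mathcal{C}^1$-regularity of $\partial\Omega$.
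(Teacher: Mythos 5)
Your proposal is correct, and the first half (the Implicit Function Theorem applied to $F$, the invertibility of $D_{(\mu,u)}F(i_\Omega,\mu_0,u_0)$ via the Fredholm alternative and the simplicity of $\mu_0$, and the bootstrap $\mathcal{C}^1$-regularity of $u_h$ from the eigenvalue identity) coincides with the paper's argument essentially line by line; if anything you state the partial derivative with the correct sign $-\nu u_0$, where the paper has a harmless typo $+\dot\mu u_0$, and you are slightly more explicit than the paper in invoking Corollary~\ref{corsimple}/Proposition~\ref{peigenh} to justify simplicity of $\mu_h$.

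Where you genuinely diverge is in deriving \eqref{dode}. The paper works entirely with Henry's machinery: it uses the anti-convective derivative $D_t = \partial_t - U\cdot\partial_x$, Henry's Lemma~2.1 to pass the derivative through the pull-back $h(t)^*$, and Henry's Theorem~1.11 (a transport-theorem type formula for moving-domain integrals) to produce the boundary term, after which pairing with $u_0$ kills the bulk contributions. You instead stay completely in the fixed reference domain $\Omega$ from the start, writing the eigenvalue equation as
$\mu_h u_h(x)=\int_\Omega J(h(x)-h(z))u_h(z)|\det Dh(z)|\,dz$,
differentiating directly at $\varepsilon=0$ along $h_\varepsilon = i_\Omega+\varepsilon V$, and using nothing beyond the classical divergence theorem, Fubini, self-adjointness of $\mathcal{J}_\Omega$, and the identity $\nabla(\mathcal{J}_\Omega u_0)=\mu_0\nabla u_0$ to cancel the volume terms. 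The two routes land on exactly the same boundary integral. Your version is more elementary and self-contained, at the cost of a slightly longer vector-calculus bookkeeping; the paper's version is shorter once one accepts Henry's two lemmas as black boxes, and makes the ``transport theorem'' structure explicit, which is the conceptual reason the boundary term appears. Both hinge on the same regularity point, which you flag correctly: $u_0$ must extend $\mathcal{C}^1$ across $\partial\Omega$ (supplied by $J\in\mathcal{C}^1$) in order for the divergence-theorem/boundary-trace steps to be legitimate.
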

\begin{proof}
Under the additional condition $J \in \mathcal{C}^1(\R^N, \R)$, we get from \cite{Die} that the map $F$ is a $\mathcal{C}^1$-function between Banach spaces (see also \cite[Chapter 2]{Henry}).
In fact, $F$ is linear with respect to the variables $\mu \in \R$ and $u \in L^2(\Omega)$. Also, it is of class $\mathcal{C}^1$ with respect to $h$, since expressions
\begin{equation} \label{eq1041}
\begin{gathered}
h^* \mathcal{J}_{h(\Omega)} {h^*}^{-1} u(x)  =  \int_{h(\Omega)} J(h(x) - w) (u \circ h^{-1})(w) dw \\
\qquad \qquad \qquad =  \int_\Omega J(h(x) - h(z)) u(z)  |{\rm det}(Dh(z))| dz, \quad x \in \Omega,
\end{gathered}
\end{equation}
and $\int_\Omega u^2(x)  |{\rm det}(Dh(x))| dx$ are set by compositions among smooth functions $J$, ${\rm det}$ and $h$ 
which define $\mathcal{C}^1$-maps in the variable $h \in \rm{Diff}^1(\Omega)$.

Next, since $\mu_0$ is a simple eigenvalue with $F(i_\Omega, \mu_0, u_0) = (0,1)$, 
we are in condition to apply Implicit Function Theorem to $F$ at $(i_\Omega, \mu_0, u_0) \in \rm{Diff}^1(\Omega) \times \R \times L^2(\Omega)$.
First, we see 
\begin{eqnarray*}
\frac{\partial F}{\partial(\mu, u)}(i_\Omega, \mu_0, u_0): \R \times L^2(\Omega) & \mapsto & L^2(\Omega) \times \R \\
(\dot \mu, \dot u) & \mapsto & \left(   ( \mathcal{J}_{\Omega} - \mu_0) \dot u + \dot \mu u_0, 2 \int_\Omega u_0 \, \dot u \, dx  \right)
\end{eqnarray*}
is an isomorphism. In fact, since $\mu_0$ is a simple eigenvalue, its eigenfunction $u_0$ is orthogonal to the image of the operator $(\mathcal{J}_{\Omega} - \mu_0)$ satisfying 
$
L^2(\Omega) = {\rm R}(\mathcal{J}_{\Omega} - \mu_0) \oplus [u_0].
$

Thus, for any $f \in L^2(\Omega)$, there exists a  unique $w \in {\rm R}(\mathcal{J}_{h(\Omega)} - \mu_0)$ such that 
$$
(\mathcal{J}_{\Omega} - \mu_0) w = f - \dot \mu u_0 \quad \textrm{ with } \quad \dot \mu = \int_\Omega f u_0
$$
since for such $\dot \mu$, $f - \dot \mu u_0$ is orthogonal to $u_0$ in $L^2(\Omega)$ belonging to ${\rm R}(\mathcal{J}_{\Omega} - \mu_0)$.
Consequently, for all $(f,a) \in L^2(\Omega) \times \R$, we can take unique $\dot u = w + \frac{a}{2} u_0$ and $\dot \mu = \int_\Omega f u_0$ such that
$$
\frac{\partial F}{\partial(\mu, u)}(i_\Omega, \mu_0, u_0)(\dot \mu, \dot u) = (f,a).
$$

Therefore, by the Implicit Function Theorem, there exist $\mathcal{C}^1$-functions $h \mapsto (\mu_h, u_h)$ such that 
$
F(h, \mu_h, u_h) = (0,1)
$
whenever $\| h - i_\Omega \|_{\mathcal{C}^1(\Omega,\R^N)}$ is sufficiently small.
Thus, we have a family of simple eigenvalues $\mu_h$ and corresponding eigenfunctions $v_h=(u_h \circ h^{-1})$ for $\mathcal{J}_{h(\Omega)}$ defined by any $h$ in a neighborhood of $i_\Omega \in \rm{Diff}^1(\Omega)$ which is still differentiable with respect to $h$. 

Finally, we compute the derivative of $\mu_h$ at $h=i_\Omega$.
For this, it is enough to consider a curve of imbeddings $h(t,x) = x + t V(x)$ for a fixed $V \in \mathcal{C}^1(\Omega,\R^N)$ taking the Gateaux derivative at $t=0$.

Notice that 
$$
h(t)^* \mathcal{J}_{h(t,\Omega)} {h(t)^*}^{-1} u_{h(t)}(x) = \mu_{h(t)} u_{h(t)}, \quad x \in \Omega,
$$
and then, 
\begin{equation} \label{eq960}
\frac{\partial}{\partial t} \left( h(t)^* \mathcal{J}_{h(t,\Omega)} {h(t)^*}^{-1} u_{h(t)}(x) \right) \Big|_{t=0} 
 = \frac{\partial \mu_{i_\Omega}}{\partial t} u_0  + \mu_0 \frac{\partial u_{i_\Omega}}{\partial t} \quad \textrm{ in } \Omega.
\end{equation}
Thus, in order to complete our proof, we need to compute the derivative of the left-side of \eqref{eq960}. 
We proceed as in \cite{Henry} using the anti-convective derivative $D_t$ in the reference region $\Omega$
$$
D_t = \frac{\partial}{\partial t} - U(t,x) \cdot \frac{\partial}{\partial x} \quad 
\textrm{ with } \quad U = {\frac{\partial h}{\partial x}}^{-1} \frac{\partial h}{\partial t}.
$$

By \cite[Lemma 2.1]{Henry}, we have 
\begin{equation} \label{eq1100}
D_t  \left( h(t)^* \mathcal{J}_{h(t,\Omega)} {h(t)^*}^{-1} u_{h(t)} \right) 
= h(t)^*\frac{\partial}{\partial t} \left(\mathcal{J}_{h(t,\Omega)} {h(t)^*}^{-1} u_{h(t)} \right)
\quad \textrm{ in } \Omega.
\end{equation}
Now, set $v(t,y) = {h(t)^*}^{-1} u_{h(t)}(y) = u_{h(t)}(h^{-1}(t,y))$, $y \in h(t,\Omega)$.
Then, from \eqref{eq820}, we get  
\begin{eqnarray*}
\frac{\partial}{\partial t} \left(\mathcal{J}_{h(t,\Omega)} {h(t)^*}^{-1} u_{h(t)} \right) \Big|_{t=0} 
& = & \frac{\partial}{\partial t} \left( \mathcal{J}_{h(t,\Omega)} v \right) \Big|_{t=0} \\
& = & \frac{\partial}{\partial t} \left( \int_{h(t,\Omega)} J(y - w) v(t, w) dw \right) \Big|_{t=0} \quad \textrm{ for } y \in h(t,\Omega).
\end{eqnarray*}

Due to \cite[Theorem 1.11]{Henry}, we can compute domain derivatives for integrals obtaining 
\begin{eqnarray*}
& & \frac{\partial}{\partial t} \left(\mathcal{J}_{h(t,\Omega)} {h(t)^*}^{-1} u_{h(t)} \right) \Big|_{t=0} 
 =  \frac{\partial}{\partial t} \left( \mathcal{J}_{h(t,\Omega)} v \right) \Big|_{t=0} \\ 
& & \qquad \qquad =  \int_\Omega J(x-w) (D_t u)(0,w) \, dw + \int_{\partial \Omega} J(x-z) u_0(z) \, (V \cdot N_\Omega)(z) \, dS(z)
\end{eqnarray*}
where $N_\Omega$ is the unitary normal vector to $\partial \Omega$. 

Notice that the last integral on $\partial \Omega$ is well defined. 
Since $J$ is $\mathcal{C}^1$, the eigenfunctions $u_h$ and their derivatives can be continuously extended to the border $\partial \Omega$.
Thus, $u_h \in \mathcal{C}^1(\Omega)$, and we can take the trace of $u_h$ on $\partial \Omega$.  

Consequently, from \eqref{eq960} and \eqref{eq1100},   we get 
\begin{eqnarray*}
&& \frac{\partial \mu_{i_\Omega}}{\partial t} u_0  + \mu_0 \frac{\partial u_{i_\Omega}}{\partial t} 
 = \left[ U(t,x) \cdot \frac{\partial}{\partial x} \left( h(t)^* \mathcal{J}_{h(t,\Omega)} {h(t)^*}^{-1} u_{h(t)} \right)  +    \frac{\partial}{\partial t} \left(\mathcal{J}_{h(t,\Omega)} {h(t)^*}^{-1} u_{h(t)} \right) \right]_{t=0}   \\
& & \qquad \qquad =  V \cdot \frac{\partial}{\partial x} \left( \mathcal{J}_{\Omega} u_0 \right) + \mathcal{J}_{\Omega} D_t u 
 + \int_{\partial \Omega} J(\cdot -z) u_0(z) \, (V \cdot N_\Omega)(z) \, dS(z) \quad \textrm{ in } \Omega.
\end{eqnarray*}   
Hence, multiplying by $u_0$ and integrating on $\Omega$, we obtain 
\begin{eqnarray*}
& & \frac{\partial \mu_{i_\Omega}}{\partial t} + \int_\Omega \mu_0 u_0 \frac{\partial u_{i_\Omega}}{\partial t} dx 
= \int_\Omega \mu_0 u_0 \left( V \cdot \nabla u_0 + \frac{\partial u_{i_\Omega}}{\partial t} - V \cdot \nabla u_0 \right) dx \\
& & \qquad \qquad + \int_{\partial \Omega} \left( \int_\Omega J(x -z) u_0(x) \, dx \right) u_0(z) \, (V \cdot N_\Omega)(z) \, dS(z)
\end{eqnarray*}
which implies 
$$
\frac{\partial \mu_{i_\Omega}}{\partial t} = \mu_0 \int_{\partial \Omega} u_0^2(z) \, (V \cdot N_\Omega)(z) \, dS(z)
$$
completing the proof.
\end{proof}

Therefore, as a direct consequence of Lemma \ref{deri} and items $a)$ and $c)$ from Remark \ref{propeig}, 
we get Theorem \ref{cderi} concerning  the Dirichlet problem \eqref{eigeneq}.

\begin{remark}
From Corollary \ref{corl1}, we know $\lambda_1(\Omega^*)$ is simple, and a critical point to the map 
$$h \in {\rm Diff}^1(\Omega^*) \mapsto \Big( \lambda_1(h(\Omega^*)), |h(\Omega^*)|=|\Omega^*| \Big).$$ 
Hence, from Theorem \ref{cderi} 
$$
0=\int_{\partial \Omega^*} u_1^2 \; V \cdot N_{\Omega^*} dS 
\quad \textrm{ for all } V \in \mathcal{C}^1(\Omega^*, \R^N) 
\textrm{ such that } \int_{\partial \Omega^*} V \cdot N_{\Omega^*} = 0.
$$
Therefore, the first eigenfunction $u_1$ associated to $\lambda_1(\Omega^*)$ satisfies the boundary condition 
$u_1(x) = c$ on $\partial \Omega$ for some constant $c \geq 0$.
\end{remark}

\begin{remark}
Finally, let us give an example which shows that in general, the first eigenvalue $\lambda_1(\Omega)$ of \eqref{eigeneq} does not possess a maximizer among open bounded sets with $|\Omega|=constant$. 

For this, let $h: (0,1)^2 \mapsto (0,a) \times (0,1/a) \subset \mathbb{R}^2$ be the imbedding $h(x_1, x_2) = ( a x_1, (1/a) \,  x_2)$ for any $a>0$.
Notice that ${\rm det}(Dh) =1$ and $|h((0,1)^2)|=1$ for all $a$. 
Also, from \eqref{eq1041} we have 
$$
h^* \mathcal{J}_{h((0,1)^2)} {h^*}^{-1} u(x)  =  \int_{(0,1)^2} J(a(x_1-y_1), (1/a)(x_2 - y_2)) u(y) dy, \quad \forall x \in (0,1)^2.
$$
Hence, since $J(x) \to 0$ as $|x| \to +\infty$ by hypothesis ${\rm (H)}$, we obtain that $h^* \mathcal{J}_{h((0,1)^2)} {h^*}^{-1} u(x) \to 0$ as $a \to 0$, for all $x \in (0,1)^2$ and $u \in L^2(\Omega)$.
Therefore, one can get from Proposition \ref{peigenh} and Remark \ref{mu1} that $\mu_1(h((0,1)^2)) \to 0$ as $a \to 0$ implying that $\lambda_1(h((0,1)^2)) \to 1$ as $a \to 0$. As $1 \in \sigma_{ess} (\mathcal{B}_\Omega)$ for any open set $\Omega$, we conclude our assertion. 
\end{remark}

\vspace{0.5 cm}

{\bf Acknowledgements.} 
The first author (RB) has been supported by Fondecyt (Chile)
Project  \# 116--0856,   and the second one (MCP) by CNPq 303253/2017-7 and FAPESP 2019/06221-5 (Brazil). Finally, we would like to mention that this work was done while the second author was visiting the Instituto de F\'isica at P. Universidad Cat\'olica de Chile. He kindly expresses his gratitude to the institute.

\typeout{get arXiv to do 4 passes: Label(s) may have changed. Rerun}

\end{document}